\documentclass[11pt]{amsart}
\usepackage[T1]{fontenc}
\usepackage{lmodern}
\usepackage{amssymb,microtype}
\usepackage{xcolor}
\usepackage[hidelinks]{hyperref}
\newtheorem{theorem}{Theorem}[section]
\newtheorem{lemma}[theorem]{Lemma}
\newtheorem{proposition}[theorem]{Proposition}
\newtheorem{corollary}[theorem]{Corollary}
\theoremstyle{remark}
\newtheorem{remark}[theorem]{Remark}
\numberwithin{equation}{section}
\newcommand{\Tr}{\operatorname{Tr}}
\newcommand{\Rea}{\operatorname{Re}}
\newcommand{\Ima}{\operatorname{Im}}

\newcommand{\doi}[1]{\href{https://doi.org/#1}{doi:#1}}

\title[Type III and semifinite noncommutative $L_p$-spaces]
{Banach non-isomorphism of type III and semifinite
noncommutative $L_p$-spaces}
\author[J. Huang]{Jinghao Huang}
\address{Institute for Advanced Study in Mathematics of HIT,
Harbin Institute of Technology, Harbin, 150001, China}
\email{jinghao.huang@hit.edu.cn}

\author[F. Sukochev]{Fedor Sukochev}
\address{School of Mathematics and Statistics, University of NSW,
Sydney, 2052, Australia}
\email{\color{blue}f.sukochev@unsw.edu.au}

\thanks{J. Huang was supported by the NNSF of China
(No. 12031004, 12301160, 12471134 and 12671159);
F. Sukochev was supported by the ARC (DP230100434).}
\date{October 8, 2026}
\subjclass[2020]{Primary 46L51; Secondary 46B20, 46L10, 46L54}
\keywords{Noncommutative $L_p$-space, type III algebra, conditional expectation,
Banach isomorphism, complemented embedding, equiintegrability}
\hypersetup{pdftitle={Banach non-isomorphism of type III and semifinite noncommutative Lp-spaces},
pdfauthor={Jinghao Huang and Fedor Sukochev}}
\begin{document}
\begin{abstract}
Let $M$ be a nonzero type III von Neumann algebra with separable
predual, and let $N$ be a semifinite von Neumann algebra.
We prove that $M_*$ is not isomorphic to a subspace of $N_*$.
For $1<p<\infty$, $p\ne2$, we prove that $L_p(M)$ is not
isomorphic to a complemented subspace of $L_p(N)$.
In particular, the Banach isomorphism class of $L_p(M)$
distinguishes type III algebras from semifinite algebras for
$1\le p<\infty$, $p\ne2$.
\end{abstract}
\maketitle

\section{Introduction}

The isometric structure of a noncommutative $L_p$-space retains
substantial information about the underlying von Neumann algebra.
The Banach isomorphism problem asks how much of that information
survives when the norm is preserved only up to equivalence.
We consider the distinction between type III and semifinite
algebras. Throughout the paper an isomorphism means a bounded
linear isomorphism of complex Banach spaces. No positivity,
multiplicativity or compatibility with matrix norms is assumed.
The spaces $L_p(M)$ are the Haagerup spaces; the predual is
identified with $L_1(M)$. We refer to Pisier and Xu~\cite{PX}
for the general theory.

Finite tracial targets provide the first obstruction.
Sukochev~\cite{Sukochev} proved that the Schatten class
$S_p$ does not embed into a finite tracial $L_p$-space for
$2<p<\infty$. Haagerup, Rosenthal and Sukochev~\cite{HRS}
established the corresponding result for $1\le p<2$.
Since $L_p(M)$ contains a copy of $S_p$ whenever $M$ is properly infinite,
these theorems exclude embeddings of such spaces into finite
tracial $L_p$-spaces. Sukochev and Xu~\cite{SX} subsequently
obtained the analogous obstruction for $0<p<1$.

The memoir of Haagerup, Rosenthal and Sukochev~\cite{HRS} also
classifies the infinite-dimensional separable $L_p$-spaces of
hyperfinite semifinite von Neumann algebras into thirteen
isomorphism classes, for $1\le p<\infty$, $p\ne2$.
The same memoir proves that the $L_p$-spaces of the hyperfinite
type $\mathrm{III}_\lambda$ factors, $0<\lambda\le1$, are
completely isomorphic. The comparison between type III and
type $\mathrm{II}_\infty$ remains outside the finite-target
obstruction: both algebras have $L_p$-spaces containing $S_p$.
Haagerup, Rosenthal and Sukochev ask whether a type III predual
can embed isomorphically into a type $\mathrm{II}_\infty$ predual,
and whether the corresponding injective factors can have Banach
isomorphic preduals~\cite[pp.~5 and 49]{HRS}.
Pisier and Xu~\cite[Problem~10.9, p.~1509]{PX} formulate the
corresponding question for type $\mathrm{II}_\infty$ and type III
algebras, without factoriality or injectivity assumptions:
are the associated $L_p$-spaces isomorphic when $p\ne2$?
Section~10 of~\cite{PX} considers separable infinite-dimensional
spaces and includes the quasi-Banach exponents.

Several results distinguish the two types under additional
assumptions on the embedding. Oikhberg, Rosenthal and
St{\o}rmer~\cite[Theorem~1]{ORS} proved that an isometric embedding
of $M_*$ into a semifinite predual forces $M$ to be semifinite;
\cite[Problem~2]{ORS} asks about isomorphic embeddings.
Junge~\cite{Junge} embedded the operator Hilbert space $OH$
completely isomorphically into a von Neumann predual, whereas
Pisier~\cite{Pisier} excluded completely isomorphic embeddings
of $OH$ into semifinite preduals.
Haagerup and Musat~\cite[Theorem~1.1]{HM} proved that a hyperfinite
type III predual cannot be completely isomorphic to a completely
complemented subspace of a hyperfinite semifinite predual, when
both algebras have separable preduals. Haagerup and Musat also obtained
uncountably many complete-isomorphism classes of hyperfinite
type $\mathrm{III}_0$ preduals~\cite[Theorem~1.3]{HM}.
The isometric and operator-space results do not settle the
corresponding questions for arbitrary bounded linear maps.

Approximation methods provide a different connection with finite
algebras. Raynaud and Xu~\cite{RX} developed equiintegrability and
subspace methods for $L_p$-spaces of general von Neumann algebras.
Haagerup, Junge and Xu~\cite[Theorem~3.1]{HJX} placed a general
$L_p$-space inside an auxiliary $L_p$-space approximated by an
increasing family of finite tracial $L_p$-spaces. For $1\le p<\infty$,
the inclusions in that construction are contractively
complemented. The reduction theorem supplies finite tracial
approximants, but does not realize the auxiliary space as the
tracial $L_p$-space of a single semifinite algebra.
Recent embedding theorems further clarify the role of the target
function space. Huang, Junge, Sukochev and
Zanin~\cite[Theorem~1]{HJSZ25} characterize embeddings of symmetric
operator spaces $E(A)$ into finite tracial $L_p$-spaces,
$1\le p<2$, in terms of embeddings of $E(0,\infty)$.
Here $E$ is a symmetric Banach function space, $A$ is an atomless
$\sigma$-finite semifinite algebra equipped with an infinite
trace, and the hypotheses require
\[
 E(0,\infty)\cap L_\infty(0,\infty)
 \ne L_p(0,\infty)\cap L_\infty(0,\infty).
\]
The case $E=L_p$ is therefore explicitly excluded.
In a 2026 preprint, Huang, Junge, Sukochev and Zanin~\cite{HJSZ}
proved that, for $0<p<2$, a general noncommutative $L_p$-space
over an algebra acting on a separable Hilbert space embeds
isometrically into a symmetric space $E(G,\tau)$ over a finite
tracial algebra whenever $t\mapsto t^{-1/p}$ belongs to $E(0,1)$.
The choice $E=L_p(0,1)$ fails this hypothesis.
Thus the result concerns a change of target function space,
rather than an embedding into a tracial space with the same
exponent. The distinction between the source and target
exponents is essential in the present problem.

Our main result concerns ordinary Banach-space embeddings.

\begin{theorem}\label{thm:unified}
Let $M$ be a nonzero type III von Neumann algebra with separable
predual, and let $N$ be a semifinite von Neumann algebra.
\begin{enumerate}
\item The space $L_1(M)=M_*$ is not isomorphic to a subspace of
      $L_1(N)=N_*$.
\item For $1<p<\infty$, $p\ne2$, the space $L_p(M)$ is not
      isomorphic to a complemented subspace of $L_p(N)$.
\end{enumerate}
Consequently, $L_p(M)$ and $L_p(N)$ are not Banach isomorphic
for $1\le p<\infty$, $p\ne2$.
\end{theorem}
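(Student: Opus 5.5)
The plan is to find a Banach-space invariant of $L_p(N)$, $N$ semifinite, that is stable under isomorphic embeddings (for $p=1$) or complemented embeddings (for $1<p<\infty$), and to show that it fails in $L_p(M)$ whenever $M$ is type III with separable predual. The natural candidate is a \emph{subsequence splitting} property of Raynaud--Xu type~\cite{RX}. In a semifinite $L_p(N)$, a bounded sequence $(x_n)$ has a subsequence with $x_n=y_n+z_n$. Here $(y_n)$ is $p$-equiintegrable and, after an arbitrarily small perturbation, lives in $eL_p(N)e$ for a projection $e$ with $\tau(e)<\infty$. The $z_n$ are asymptotically disjointly supported, so they span a copy of $\ell_p$, complemented when $p>1$. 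The finite-trace corner $eL_p(N)e=L_p(eNe)$ is then a finite tracial $L_p$-space.

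First I would reduce to the finite tracial obstruction. Suppose $T\colon L_p(M)\to L_p(N)$ is an isomorphic embedding, complemented when $p>1$. I would produce, inside $L_p(M)$, a family of subspaces $X_k$ that are uniformly isomorphic to a fixed space $X$ and that remain ``spread out'' under splitting. Concretely, take normalized blocks supported under mutually orthogonal projections $f_k\in M$. Since $M$ is type III, all $f_k$ are equivalent to $1$, so the corners $f_kMf_k\cong M$ give isometric copies $L_p(f_kMf_k)\cong L_p(M)$ that are $1$-complemented. A diagonal and gliding-hump argument applied to the images $T(L_p(f_kMf_k))$ should then give the following: either some copy of $L_p(M)$ embeds, complementably when $p>1$, into a finite tracial $L_p(eNe)$, or the images are essentially disjoint. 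In the disjoint case they would generate an $\ell_p$-sum, and one must exclude that structure from sitting inside $L_p(M)$ in the wrong way. The first alternative is ruled out by Sukochev~\cite{Sukochev} for $p>2$ and by Haagerup--Rosenthal--Sukochev~\cite{HRS} for $1\le p<2$, because $L_p(M)\supset S_p$ when $M$ is properly infinite. That step also needs complementation in $L_p(eNe)$; for $p=1$ a plain embedding suffices, since the $L_1$ result in~\cite{HRS} is an embedding result.

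The genuinely type III input, and the main obstacle, is ruling out the second alternative. Here I would use the Haagerup density picture: $x=d_\varphi^{1/2p}a\,d_\varphi^{1/2p}$ for a faithful normal state $\varphi$. Separability of the predual guarantees such a $\varphi$. I would also use the fact that the centralizer $M_\varphi$ can be made to contain no finite trace-like corner controlling all of $L_p(M)$. The key claim to prove is a \emph{uniform equiintegrability statement}: the unit ball of $L_p(M)$ contains sequences that are not equiintegrable. Yet every splitting of their images must keep a nonvanishing equiintegrable part, because the modular automorphism group mixes the copies $f_kMf_k$ through partial isometries $v_k$ with $v_kv_k^*=f_k$, $v_k^*v_k=1$. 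I would try first to transport a single weakly null, non-equiintegrable sequence $u_n$ of the form $d_\varphi^{1/2p}w_n d_\varphi^{1/2p}$ by these $v_k$. The aim is a contradiction with the dichotomy $y_n+z_n$, using the $p\ne2$ hypothesis via Kadec--Pełczyński-type estimates on disjoint versus equiintegrable parts.

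Finally, the consequence follows immediately. An isomorphism $L_p(M)\cong L_p(N)$ is in particular a complemented embedding, so parts (1) and (2) exclude it for all $1\le p<\infty$, $p\ne2$.
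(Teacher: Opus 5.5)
There is a genuine gap, in fact two.

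\textbf{The splitting is false in the noncommutative infinite-trace case.} Let $(\delta_k)$ be the standard basis of $\ell_2$ and work in $L_p(B(\ell_2))=S_p$. The matrix units $e_{1k}$ form a $p$-equiintegrable set. Indeed, $e_\alpha e_{1k}e_\alpha$ has rank one, so $\|e_\alpha e_{1k}e_\alpha\|_p=\|e_\alpha\delta_1\|\|e_\alpha\delta_k\|\le\|e_\alpha\delta_1\|$, and this tends to $0$ uniformly in $k$ when $e_\alpha\to0$ ultraweakly. Yet $\|e_{1k}-ee_{1k}e\|_p\to1$ for every finite-rank projection $e$, and $(e_{1k})$ spans the row space $\ell_2$, not $\ell_p$. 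So no splitting with the equiintegrable part in a finite-trace corner and the rest spanning $\ell_p$ exists. Equiintegrable parts only localize near $H_{n,p}=\{y:q_nyq_n=0\}$, that is, near rows and columns over $F\overline\otimes M_n$. This is not a finite tracial $L_p$-space, and placing it inside one needs an extra device. The paper uses the free circular embedding $J_p(a)=\sum_ia_ic_i$ and its retraction (Lemma~\ref{lem:row}, Proposition~\ref{prop:head-complement}).

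\textbf{The step you call the main obstacle cannot work as set up.} Everything before it uses only that $M$ is properly infinite: $f_k\sim1$, $f_kMf_k\cong M$, $S_p\subset L_p(M)$. It therefore applies verbatim to $M=N=F\overline\otimes B(\ell_2)$ with $T=S=I$, where there is no contradiction. For properly infinite $M$, the $\ell_p$-sum $\bigoplus_kL_p(f_kMf_k)$ is $1$-complemented in $L_p(M)$ by pinching. So the ``disjoint'' alternative is realized inside $L_p(M)$ itself and excludes nothing, and the modular-group heuristic does not replace it. The dichotomy is also unjustified: sequence splitting gives no uniform control over the unit ball of a subspace such as $T(L_p(f_kMf_k))$.

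\textbf{How the paper proceeds.} The essential type III input is that \emph{every} nonzero corner $fMf$ is properly infinite, so $L_p(fMf)\supset S_p$. The work is to find \emph{some} nonzero corner on which the map is uniformly controlled.

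For $p=1$, normality of $T^*$ and Lemma~\ref{lem:tail} give $f\ne0$ and $n$ with $\|Q_nTi_f\|<c/2$. Hence $(I-Q_n)Ti_f$ embeds all of $(fMf)_*$ into $H_{n,1}$.

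For $1<p<2$ no such compression is available, and the paper argues in steps:
\begin{enumerate}
\item Equiintegrability of unitary orbits makes $SD_mT$ approximate $I$ uniformly on orbits.
\item For a hyperfinite source, Haar averaging over $\mathcal U(M_n)$ yields a right multiplier $R_b$ factoring through a finite tracial ultraproduct.
\item The corner $eMe$, $e=1_{[\delta,\infty)}(|b^*|)$, forces $b=0$, contradicting the approximation in step~1.
\end{enumerate}
Duality handles $p>2$, and Proposition~\ref{prop:expected-typeiii-general} removes hyperfiniteness.
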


Theorem~\ref{thm:unified} answers the predual question of~\cite{HRS}
and the Banach-range part of~\cite[Problem~10.9]{PX}, under the
separability convention of those works. Neither $M$ nor $N$
is required to be a factor or injective, and no separability
assumption is imposed on $N$.
For $1<p<\infty$, the theorem does not exclude embeddings with
uncomplemented range. At $p=2$, all separable infinite-dimensional
$L_2$-spaces are isometric Hilbert spaces.
The exponent $p=\infty$ is excluded: Christensen and
Sinclair~\cite{CS} proved that all infinite-dimensional injective
factors with separable preduals are completely isomorphic as
operator spaces. The range $0<p<1$ is not treated in this paper.

The proof has three parts. First, a local compression lemma for
normal bounded maps reduces an embedding of preduals to an
embedding of a nonzero type III corner into a finite tracial
$L_1$-space. A free-circular representation of finite matrix heads
then permits an application of the obstruction in~\cite{HRS}.
Second, for $1<p<2$, equiintegrability turns a complemented
embedding into operators factoring through finite tracial
$L_p$-spaces and approximating the identity uniformly on unitary
orbits. For a hyperfinite source, averaging over finite-dimensional
unitary groups produces a module map with a finite tracial
factorization. The same obstruction forces the averaged map to
vanish. Duality gives the conclusion for $p>2$.
Finally, the expected-subfactor theorems of
Popa~\cite{PopaExpected}, Ando, Haagerup, Houdayer and
Marrakchi~\cite{AHHM}, and Marrakchi~\cite{Marrakchi20}, combined
with measurable selection over the center, give an expected
hyperfinite type III subalgebra in every type III algebra with
separable predual. This reduction removes hyperfiniteness without
requiring the Banach maps to preserve central decompositions.

Section~\ref{sec:prelim} contains the notation and the finite-head
construction. Sections~\ref{sec:endpoint} and~\ref{sec:banach}
give the predual and averaging arguments, respectively.
Section~\ref{sec:reduction} treats the general source and target
algebras and completes the proof of Theorem~\ref{thm:unified}.

\section{Preliminaries and finite matrix heads}\label{sec:prelim}
\subsection{Notation and standard results}

All algebras and linear spaces are complex. A Banach embedding is a
bounded linear map that is bounded below. We call a bounded linear
map between von Neumann algebras normal if it is ultraweakly
continuous, without an implicit positivity assumption.
We refer to Takesaki \cite{Takesaki} for von Neumann algebra
terminology and to \cite{PX} for noncommutative integration.

For a semifinite algebra $(A,\tau)$ we identify $A_*$ with
$L_1(A,\tau)$ through the bilinear pairing
\[
                   \langle y,a\rangle=\tau(ya).
\]
For a general von Neumann algebra, $L_1(A)$ denotes its Haagerup
$L_1$-space, canonically identified with $A_*$ \cite[Section 3]{PX}.

For a projection $e\in A$, the compression
$C_e:A\to eAe$, $C_e(a)=eae$, is normal and contractive.
The preadjoint of $C_e$ is the isometric inclusion
\[
 i_e:(eAe)_*\longrightarrow A_*,\qquad
 (i_e\omega)(a)=\omega(eae).
\]
We shall use the following established obstruction.

\begin{theorem}[Haagerup--Rosenthal--Sukochev]\label{thm:hrs}
Let $1\le p<2$. If $G$ is a finite von Neumann algebra with a
faithful normal finite trace $\sigma$, then the Schatten class
$S_p=S_p(\ell_2)$ is not isomorphic to a subspace of $L_p(G,\sigma)$.
\end{theorem}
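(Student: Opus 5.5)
The plan is to argue by contradiction via the noncommutative Kadec--Pe{\l}czy\'nski dichotomy in finite tracial $L_p$-spaces. Suppose $T:S_p\to L_p(G,\sigma)$ is bounded with $\|x\|_p\le\|Tx\|\le C\|x\|_p$. Since $\sigma$ is finite, $L_p(G)\subset L_1(G)$ continuously. Subsets of $L_p(G)$ then split into two kinds. The first kind is $p$-equiintegrable, meaning $\sup_{x}\sup_{\sigma(e)<\delta}\|e\,x\,e\|_p\to0$ as $\delta\to0$; on such sets the $L_p$- and $L_1$-quasinorms, or more usefully the $L_p$- and $L_r$-norms for suitable $r<p$, are equivalent. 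The second kind contains almost disjointly supported pieces spanning $\ell_p$. I would take this dichotomy in the form developed by Raynaud and Xu~\cite{RX} and by Haagerup--Rosenthal--Sukochev~\cite{HRS}, and use it to separate two families of finite blocks inside $S_p$.

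\emph{Step 1 (Hilbertian pieces are equiintegrable).} For each $n$, the row space $R_n=\operatorname{span}\{e_{1j}:j\le n\}$ and the column space $C_n$ are isometric to $\ell_2^n$. A subspace of $L_p(G)$, $p<2$, that is isomorphic to $\ell_2$ has an equiintegrable unit ball. Applied to $T(R)$ and $T(C)$ for the infinite row and column, this says their unit balls are equiintegrable with a modulus depending only on $C$. I would then upgrade this to uniform control on the images of all rank-one matrices $\xi\otimes\bar\eta$ with $\|\xi\|=\|\eta\|=1$. For this I would use the unitary invariance of $S_p$: the map $x\mapsto uxv$ is an isometry of $S_p$, so $T\circ(u\cdot v)$ is again an embedding with constant $C$. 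A compactness/ultraproduct argument over $G^{\mathcal U}$ then yields a single modulus for all these maps simultaneously.

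\emph{Step 2 (contradiction from the diagonal).} The diagonal units $e_{ii}$ span $\ell_p$ isometrically, and each $e_{ii}$ is rank one, so by Step~1 the family $\{Te_{ii}\}$ is uniformly equiintegrable. Uniform equiintegrability of a bounded family in a finite $L_p$ with $p<2$ forces the $\ell_p$-sums $\|\sum_{i\le n}\epsilon_i Te_{ii}\|_p$ to behave, on average over signs $\epsilon_i$, like the $L_r$-norms for some $r<p$. Noncommutative Khintchine inequalities, in the $L_1$/$L_r$ form available in finite algebras, then yield an upper bound of order $o(n^{1/p})$. This contradicts $\|\sum\epsilon_i e_{ii}\|_p=n^{1/p}$. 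Equivalently, one shows that the square function $\|(\sum|Te_{ii}|^2)^{1/2}\|$ is comparable to $n^{1/2}$ on an equiintegrable family, which is strictly smaller than $n^{1/p}$. For $p=1$ the same scheme works with $L_1$ and weak compactness, Dunford--Pettis style, in place of $p$-equiintegrability.

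\emph{Main obstacle.} The hard step is the uniformity in Step~1: passing from equiintegrability of the single subspaces $T(R)$, $T(C)$ to a modulus that is uniform over all rank-one matrices, or over all row spaces $uRv$. The unitary conjugates are not contained in any one Hilbertian subspace. Naive equiintegrability of $\{T(uRv)\}_{u,v}$ could fail because mass may concentrate on projections that depend on $u,v$. I would first try an ultrapower argument: if uniformity failed, one would obtain in $L_p(G^{\mathcal U})$ a Hilbertian subspace whose unit ball is not equiintegrable, contradicting the dichotomy applied in the finite algebra $G^{\mathcal U}$. Should this fail, I would instead use the stronger structural fact from~\cite{HRS} that every subspace of a finite $L_p$, $1\le p<2$, is either isomorphic to a subspace of $L_r$ for some $r>p$ after equiintegrable truncation, or contains $\ell_p$ with disjoint-type supports. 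I would then exploit that $S_p$ mixes $\ell_2$ rows with $\ell_p$ diagonals inside every finite block $S_p^n$, uniformly in $n$.
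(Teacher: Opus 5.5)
\textbf{Verdict: genuine gap.} The paper does not prove this statement; it quotes it as \cite[Corollary~1.2]{HRS}. Your outline leaves unproved exactly the part that carries the whole theorem.

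Step~2 is correct, but it only records that a sequence equivalent to the unit vector basis of $\ell_p$ is never $p$-equiintegrable. For $1<p<2$ this is the $r_n$-criterion of Lemma~\ref{lem:intrinsic-ui}; for $p=1$ it is the failure of relative weak compactness. So if $T$ is an embedding, $\{Te_{ii}\}$ is automatically \emph{not} equiintegrable. The uniform conclusion of Step~1 is therefore just the negation of your hypothesis. All the content, including every use of the finiteness of $\sigma$, has to sit in Step~1, and there you have no working argument.

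Both mechanisms you offer for Step~1 fail.

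\emph{The modulus claim.} The claim that $T(R)$ is equiintegrable ``with a modulus depending only on $C$'' is false. Transplant the Rademacher functions to a dyadic interval of length $2^{-k}$ and normalize them in $L_p[0,1]$. By Khintchine's inequality they span subspaces uniformly isomorphic to $\ell_2$. Yet each such subspace contains a unit vector supported on a set of measure $2^{-k}$. With disjoint intervals, the union of the unit balls even contains a disjoint $\ell_p$-sequence. So the Hilbertian character of the individual pieces $T(uRv)$ gives no common modulus. An argument must use how these pieces share matrix units inside $S_p$, and yours does not.

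\emph{The ultrapower argument.} The limit of the maps $x\mapsto T(u_kxv_k)$ takes values in the Banach ultrapower $\prod_{\mathcal U}L_p(G)$. Failure of uniformity means precisely that the image of $e_{11}$ lies outside the tracial part $L_p(G^{\mathcal U})$; cf.~\eqref{eq:tracial-ultra-inclusion}--\eqref{eq:ultra-bounded-approx}. No non-equiintegrable Hilbertian subspace of $L_p(G^{\mathcal U})$ is produced. No argument of this soft kind can succeed. Let $G_0$ be the hyperfinite $\mathrm{II}_1$ factor, and let $\iota_n:M_n\to G_0$ be unital embeddings preserving the normalized trace. Then $x\mapsto[n^{1/p}\iota_n(x)]_{\mathcal U}$, defined on finite matrices, extends to an isometric embedding of $S_p$ into $\prod_{\mathcal U}L_p(G_0)$.

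\emph{The fallback.} For the same reason, each $S_p^n$ embeds isometrically into the single finite space $L_p(G_0)$. Estimates made inside finite blocks ``uniformly in $n$'' therefore cannot detect the obstruction. The dichotomy applied to $T(S_p)$ says only that it contains $\ell_p$, which $S_p$ does anyway.

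What is missing is the genuinely infinite-dimensional finite-trace analysis carried out in~\cite{HRS}, which the paper quotes rather than reproves.
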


This is \cite[Corollary 1.2]{HRS}; no separability assumption on
$G$ is needed. Normalizing the trace does not change the assertion.
We also use the standard fact that, for $1\le p<\infty$,
the $L_p$-space of a nonzero countably decomposable properly
infinite von Neumann algebra contains $S_p$ isometrically;
see the introductory discussion in \cite{HRS}, and the
amplification theory in \cite{Takesaki}.
In particular, this applies to every nonzero type III corner
used below. 

For a general algebra $A$, $L_p(A)$ means the Haagerup space.
It is reflexive for $1<p<\infty$, with dual $L_q(A)$ when
$p^{-1}+q^{-1}=1$. Bounded left and right multiplication have
norm at most the operator norm of the multiplier; a
trace-preserving conditional expectation is contractive on
tracial $L_p$. For a projection $e\in A$,
$eL_p(A)e$ identifies isometrically with $L_p(eAe)$.
We use these standard properties in the form given in
\cite[Sections 1 and 3]{PX}.

We recall the free-probability input used in Subsection~\ref{sec:head}.
A standard circular variable is
$c=(s_1+is_2)/\sqrt2$, where $s_1,s_2$ are free standard
semicircular variables. Thus $\tau(c^*c)=1$.
Its absolute value has the quarter-circle distribution, and
\begin{equation}\label{eq:mu}
 \mu:=\tau(|c|)
 =\frac1\pi\int_0^2 t(4-t^2)^{1/2}\,dt
 =\frac8{3\pi}.
\end{equation}
The construction of tracial free products and this distribution
are recalled in \cite[Lectures 6--7 and Example 15.15]{NS}.

Suppose that a standard free circular family $(c_i)$ is free from
a tracial algebra $(F,\tau_F)$, and let $E_F$ be the conditional
expectation onto $F$ in their tracial free product.
The scalar-to-operator-valued cumulant relation
\cite[Theorem 3.5, equation (3.6)]{NSS} implies that the joint
$F$-valued cumulants of $(c_i,c_i^*)$ vanish except in order two,
where
\begin{align}
 E_F(c_i b c_j^*)&=\delta_{ij}\tau_F(b)1,\notag\\
 E_F(c_i^* b c_j)&=\delta_{ij}\tau_F(b)1
                    \qquad(b\in F).\label{eq:cov}
\end{align}
The second cumulants with equal adjoint pattern vanish as well.
The operator-valued moment--cumulant formula therefore determines
the joint $F$-valued distribution from \eqref{eq:cov}. We use this
characterization in Lemma~\ref{lem:row}.

\subsection{Finite matrix heads}\label{sec:head}

For a finite tracial algebra $(F,\tau_F)$, let
$\mathcal R_p(F)$ and $\mathcal C_p(F)$ denote the completions of
finite sequences in $F$ for the norms
\[
 \|(a_i)\|_{\mathcal R_p}
 =\left\|\left(\sum_i a_i a_i^*\right)^{1/2}\right\|_p,\qquad
 \|(a_i)\|_{\mathcal C_p}
 =\left\|\left(\sum_i a_i^*a_i\right)^{1/2}\right\|_p.
\]
The row and column spaces are identified with the corresponding subspaces of
$L_p(F\overline\otimes B(\ell_2))$; row and column duality is recalled in
\cite[Section 6]{PX}.

\begin{lemma}\label{lem:row}
Let $(F,\tau_F)$ have a faithful normal tracial state, and let
$1\le p<\infty$. There is a finite tracial algebra $P$ and a linear map
$J_p:\mathcal R_p(F)\to L_p(P)$ such that
\begin{equation}\label{eq:row-bounds}
 \mu\|a\|_{\mathcal R_p}\le\|J_pa\|_p\le2\|a\|_{\mathcal R_p},
 \qquad \mu=\frac8{3\pi}.
\end{equation}
For $1<p<\infty$ there is a bounded map
$K_p:L_p(P)\to\mathcal R_p(F)$ with
$K_pJ_p=I$ and $\|K_p\|\le2$.
The same assertions hold for columns.
\end{lemma}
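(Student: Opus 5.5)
We take $P=F*L(\mathbb F_\infty)$, the tracial free product of $F$ with a standard free circular family $(c_i)$, and set
\[
 J_p(a)=\sum_i a_i c_i .
\]
This is a free version of the Khintchine inequality for rows with coefficients in $F$. On finite sequences, \eqref{eq:cov} gives
\[
 E_F\Bigl(\bigl(\sum_i a_ic_i\bigr)\bigl(\sum_j a_jc_j\bigr)^*\Bigr)
 =\sum_{i,j}a_iE_F(c_ic_j^*)a_j^*=\sum_i a_ia_i^*,
\]
so $J_p$ is at least isometric at the level of $L_2$-type row norms. For general $p$ we use the characterization stated after \eqref{eq:cov}. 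It says that the $F$-valued distribution of $X=\sum_i a_ic_i$ depends only on the completely positive covariance maps
\[
 b\mapsto E_F(Xb X^*)=\sum_i \tau_F(a_i^* b a_i)\,\cdot\ \text{(suitably)}
\]
and their adjoint versions. The plan is to compare $X$ with a single element $x\,c$, where $x=(\sum_i a_ia_i^*)^{1/2}\in F$ and $c$ is one circular variable free from $F$.

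The key step uses that $c$ is $R$-diagonal, hence has a polar decomposition $c=u|c|$ with $u$ Haar unitary, $*$-free from $|c|$, and both free from $F$. I would aim to show that $X$ and $xc$ have the same $*$-distribution relative to $F$, at least for left-module quantities. More precisely, I would show that the $F$-valued cumulants of $(X,X^*)$ and $(xc,(xc)^*)$ coincide on the patterns that matter for $|X^*|$. Here $E_F(XbX^*)$ and $E_F(xcbc^*x^*)$ both equal $\tau_F(b)\,xx^*$; this used $\tau_F(a_i^*ba_i)$ loosely, and the correct version is $\sum_i a_i\tau_F(b)a_i^*=\tau_F(b)xx^*$. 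For the other pattern, $E_F(X^*bX)=\sum_i\tau_F(a_iba_i^*)$, which is a scalar. For $xc$ the same pattern gives $\tau_F(xbx^*)$, and these agree since $\sum_i\tau_F(a_i b a_i^*)=\tau_F(b\sum_i a_i^*a_i)$ and $\tau_F(xbx^*)=\tau_F(b\,xx^*)$. Strictly these equal $\tau_F(b\sum a_i^*a_i)$ versus $\tau_F(b\sum a_ia_i^*)$, which agree only if $b$ commutes appropriately. This is the \emph{main obstacle}. If exact equality of distributions fails, I would instead bound the norms directly, as follows.

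For the upper bound, write $X=\sum_i a_ic_i$ as the product of the row $(a_i)$ with the column $(c_i)$. Then
\[
 \|X\|_p\le\bigl\|(\textstyle\sum a_ia_i^*)^{1/2}\bigr\|_p\,\bigl\|\sum c_i^*c_i\bigr\|_\infty^{1/2}
\]
after conditioning, and one should use the free Haagerup-type inequality $\|\sum c_i^* y c_i\|_\infty\le\|\,\cdot\,\|$. An alternative route is $X=x\,w$ with $w$ a contraction up to a factor $2$, since $\|c\|_\infty=2$; this gives the constant $2$. For the lower bound, apply $E_F$ to $c^*$-multiplied elements. Set $K_p(Y)=(E_F(Yc_i^*))_i$, with $K_pJ_p=I$ by \eqref{eq:cov}. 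Duality gives $\|K_p\|\le2$ via the upper bound for columns at the conjugate exponent $q$. The constant $\mu$ comes from testing against $xc$: we have $\|xc\|_p\ge\|x\,|c^*|\|$-type estimates and $\tau(|c|)=\mu$, so a Hölder/conditional-expectation estimate $E_F(|X^*|)\gtrsim \mu\,x$ yields the bound with $\mu$ for all $p\ge1$, including $p=1$ where $K_p$ is unavailable. Columns follow by taking adjoints.
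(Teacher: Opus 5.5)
\textbf{Where the proposal breaks down.} Your setup is the paper's: the same $P$, $J_p(a)=\sum_i a_ic_i$, the comparison element $xc$, and $K_p(Y)=(E_F(Yc_i^*))_i$. The gap is at the step you call the main obstacle, and that obstacle comes from a computational slip. Since $X^*bX=\sum_{i,j}c_i^*(a_i^*ba_j)c_j$, \eqref{eq:cov} gives
$E_F(X^*bX)=\sum_i\tau_F(a_i^*ba_i)=\tau_F\bigl(b\sum_i a_ia_i^*\bigr)=\tau_F(bx^2)$,
not $\sum_i\tau_F(a_iba_i^*)$. For the comparison element, $E_F(c^*xbxc)=\tau_F(xbx)=\tau_F(bx^2)$.

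So the second covariance map agrees with that of $xc$, just as the first one does (which you computed correctly after your own correction). The equal-adjoint-pattern second cumulants vanish for both elements. All higher $F$-valued cumulants also vanish for both, by balanced multilinearity. Hence $X$ and $xc$ have the same $F$-valued, and therefore scalar, $*$-distribution, and $\|X\|_p=\|xc\|_p$. This is exactly the paper's argument.

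From there the bounds are short. The upper bound is $\|xc\|_p\le\|x\|_p\|c\|_\infty=2\|x\|_p$. For the lower bound, write $c=v|c|$. Freeness and $cv^*=|c^*|$ give $E_F(xcv^*)=x\,\tau(|c^*|)=\mu x$. Contractivity of $E_F$ and of right multiplication by $v^*$ then yields $\mu\|x\|_p\le\|xc\|_p$ for every $p\ge1$, including $p=1$. The bound $\|K_p\|\le2$ then follows by your duality remark from the upper bound at the exponent $q$.

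\textbf{Why the fallback does not work.} It does not supply a proof of either inequality in \eqref{eq:row-bounds}.
\begin{enumerate}
\item The row--column H\"older estimate produces the factor $\|\sum_i c_i^*c_i\|_\infty^{1/2}$. Since $\|\sum_{i=1}^n c_i^*c_i\|_\infty\ge\tau\bigl(\sum_{i=1}^n c_i^*c_i\bigr)=n$, no uniform constant comes out. The unspecified ``Haagerup-type inequality'' does not repair this.
\item The claim that $X=xw$ with $\|w\|_\infty\le2$ ``since $\|c\|_\infty=2$'', and the claim that $\mu$ arises ``by testing against $xc$'', are statements about $xc$. They transfer to $X$ only through the distributional identity you had just discarded.
\item The duality argument alone gives the lower constant $1/2$ rather than $\mu$, and only for $p>1$. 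It also rests on the upper bound at $q$, which the fallback never establishes.
\item You give no argument for the order inequality $E_F(|X^*|)\ge\mu x$, and it is not what is needed. The correct mechanism is the norm inequality $\|E_F(xcv^*)\|_p\le\|xc\|_p$.
\end{enumerate}
Once the covariance computation is corrected, you can drop the fallback entirely.
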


\begin{proof}
Let $P$ be the tracial free product of $F$ with an algebra generated
by a standard free circular family $(c_i)$, and set
\[
 J_p(a)=\sum_i a_i c_i,\qquad A=\sum_i a_i a_i^*,\qquad h=A^{1/2}
\]
for finite sequences. By \eqref{eq:cov} and balanced multilinearity
of operator-valued cumulants, $J_p(a)$ is $F$-valued circular, with
covariance maps
\[
 b\longmapsto\tau_F(b)A,\qquad
 b\longmapsto\sum_i\tau_F(a_i^*ba_i)1=\tau_F(bA)1.
\]
The displayed maps are also the covariance maps of $hc$, where $c$ is a
standard circular variable free from $F$. All other cumulants
vanish in both cases. Thus $J_p(a)$ and $hc$ have the same
$F$-valued, and hence scalar, $*$-distribution
\cite[Theorem 3.5, equation (3.6)]{NSS}. In particular,
\[
                         \|J_p(a)\|_p=\|hc\|_p.
\]
Write $c=v|c|$. Freeness and $cv^*=|c^*|$ give
\[
                       E_F(hcv^*)=\mu h.
\]
Conditional expectation and right multiplication by $v^*$ are
contractive on $L_p$, so $\mu\|h\|_p\le\|hc\|_p$.
The reverse estimate in \eqref{eq:row-bounds} follows from
$\|c\|_\infty=2$ \cite{NS}. The estimates extend $J_p$ to the completed row space.

Now let $1<p<\infty$ and $q=p/(p-1)$.
The duality between $\mathcal R_p(F)$ and $\mathcal R_q(F)$ is
the sesquilinear pairing
\[
                         (a,b)\longmapsto
                         \sum_i\tau_F(a_i b_i^*).
\]
For finite sequences, freeness yields
\[
 \tau_P\bigl(J_p(a)J_q(b)^*\bigr)
 =\sum_{i,j}\tau_P(a_i c_i c_j^*b_j^*)
 =\sum_i\tau_F(a_i b_i^*).
\]
Consequently the adjoint of $J_q$ is a bounded retraction $K_p$,
with $\|K_p\|\le2$ and $K_pJ_p=I$.
On bounded elements the coefficients of $K_p$ are
\[
                          K_p(y)_i=E_F(yc_i^*).
\]
For columns use
$J_p^{\,c}(a)=\sum_i c_i^*a_i$ and the corresponding adjoint
retraction, whose coefficients are $E_F(c_i y)$.
\end{proof}

\begin{proposition}\label{lem:head}\label{prop:head-complement}
Let $(F,\tau_F)$ be a finite von Neumann algebra with a faithful
normal tracial state. Put $N=F\overline\otimes B(\ell_2)$, with trace
$\tau_F\otimes\Tr$.
Let $f_n=1\otimes P_n$, with $P_n$ the first $n$-coordinate
projection, and put $q_n=1-f_n$. For $1\le p<\infty$, the space
\[
               H_{n,p}=\{y\in L_p(N):q_nyq_n=0\}
\]
embeds into a finite tracial $L_p$-space, with bounds independent
of $n$. If $1<p<\infty$, the maps
\[
                      D_ny=y-q_nyq_n
\]
factor through finite tracial $L_p$-spaces with uniformly bounded
factorization constants.
\end{proposition}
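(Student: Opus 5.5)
Write $y\in H_{n,p}$ in matrix form relative to $f_n$ and $q_n$. Then $q_nyq_n=0$ gives
\[
 y=f_nyf_n+f_nyq_n+q_nyf_n ,
\]
that is, a finite head $f_nyf_n\in L_p(F\otimes M_n)$, a row block $f_nyq_n$ and a column block $q_nyf_n$. The plan is to embed each of the three pieces separately into a finite tracial $L_p$-space. We then take the $\ell_p$-direct sum of the three targets, which is again the $L_p$-space of a finite tracial algebra. The bounds come from the triangle inequality together with the contractivity of compressions by $f_n,q_n$, so we lose at most a constant factor $3$ in each direction.

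\textbf{The finite head.} The algebra $F\otimes M_n$ is finite, but with trace $\tau_F\otimes\Tr$ of total mass $n$. Rescaling to a state gives $L_p(F\otimes M_n,\tau_F\otimes\Tr)$ isometric to $L_p(F\otimes M_n,\tau_F\otimes\operatorname{tr}_n)$ after multiplying by $n^{1/p}$. The identity map with this scaling is isometric, so the constant is independent of $n$.

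\textbf{The row block.} Write $f_nyq_n=\sum_{k>n}\sum_{j\le n} y_{jk}\otimes e_{jk}$ and regroup it as the row indexed by $k$ of the entries $y_k:=\sum_{j\le n}y_{jk}\otimes e_{j1}$. Equivalently, $f_nyq_n$ is a row over the finite algebra $F_n:=F\otimes M_n$, whose entries $r_k=\sum_j y_{jk}\otimes e_{j,1}$ lie in $F_n$ and are supported under the projection $1\otimes e_{11}$ on the right. The point to verify is the norm identity
\[
 \|f_nyq_n\|_p=\Bigl\|\Bigl(\sum_k r_kr_k^*\Bigr)^{1/2}\Bigr\|_{L_p(F_n,\tau_F\otimes\Tr)} .
\]
This holds because $(f_nyq_n)(f_nyq_n)^*=\sum_k r_kr_k^*$ inside $f_nNf_n\cong F_n$. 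After the same rescaling of the trace to a state, $f_nyq_n$ is an element of $\mathcal R_p(F_n)$ with the normalized trace, and the scaling factor is the same $n^{1/p}$. Lemma~\ref{lem:row} then gives $J_p:\mathcal R_p(F_n)\to L_p(P_n)$ with bounds $\mu$ and $2$, independent of $n$. The column block is handled symmetrically with $J_p^{\,c}$. I expect the main care to lie in this identification of the off-diagonal blocks isometrically with $\mathcal R_p(F_n)$ and $\mathcal C_p(F_n)$ under a consistent normalization. In particular, the rescaling by $n^{1/p}$ must act simultaneously on all three pieces, so the constants stay uniform.

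\textbf{Factorization for $1<p<\infty$.} Define $\Phi:L_p(N)\to L_p(G)$, with $G$ the finite direct sum of $F_n$ and the two free products $P_n$, by
\[
 \Phi(y)=\bigl(f_nyf_n,\;J_p(f_nyq_n),\;J_p^{\,c}(q_nyf_n)\bigr),
\]
after rescaling. Define $\Psi:L_p(G)\to L_p(N)$ by
\[
 \Psi(a,u,v)=a+K_pu+K_p^{\,c}v ,
\]
where the images of $K_p$ and $K_p^{\,c}$ are read back as the corresponding blocks of $L_p(N)$. Using $K_pJ_p=I$ and $K^c_pJ^c_p=I$ from Lemma~\ref{lem:row}, we get $\Psi\Phi=D_n$, because $D_ny$ is exactly the sum of the three blocks. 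The norms satisfy $\|\Phi\|\le 3\cdot 2$ (up to the $\ell_p$-sum constant) and $\|\Psi\|\le 1+2+2$, so both bounds are uniform in $n$.
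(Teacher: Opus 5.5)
Your proof is correct and is essentially the paper's argument: you split $D_ny$ into pieces that are rows and columns over $F_n=F\otimes M_n$, normalize the mass-$n$ trace, and apply $J_p$, $K_p$ and their column versions from Lemma~\ref{lem:row}, with a finite direct sum as target. The differences are minor. The paper reblocks $\ell_2$ into $n$-blocks, so that $f_nL_p(N)$ is all of $\mathcal R_p(F_n)$ and the head $f_nyf_n$ is carried inside the row (target $P\oplus P$, with $q_n$ applied after the column retraction). Your single-column regrouping instead lands in the subspace of rows whose entries are supported on the right under $1\otimes e_{11}$, so your ``read back'' after $K_p$ should include the contractive compression $(a_k)\mapsto(a_k(1\otimes e_{11}))$, and its left analogue for columns.
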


\begin{proof}
Reblock $\ell_2$ into successive blocks of length $n$. Then
\[
 N\cong F_n\overline\otimes B(\ell_2),\qquad
 F_n=F\overline\otimes M_n,\qquad f_n=1_{F_n}\otimes e_{11}.
\]
The finite trace $\tau_{F_n}=\tau_F\otimes\Tr_n$ has mass $n$.
Apply Lemma~\ref{lem:row} to the tracial state $\tau_{F_n}/n$.
Multiplication of the resulting embedding by $n^{1/p}$, and of the
retraction by $n^{-1/p}$, restores the unnormalized norms without
changing the bounds.

For $y\in H_{n,p}$ the decomposition
\[
                           y=f_ny+q_nyf_n
\]
consists of a row and a column over $F_n$. Their norms are the
row and column norms just defined. Finite bounded coefficient
sequences are dense in these spaces. Let $J_r,J_c$ be the
embeddings from Lemma~\ref{lem:row}, and define
\[
             \Phi_n(y)=\bigl(J_r(f_ny),J_c(q_nyf_n)\bigr).
\]
The target is $L_p(G_n)$ for the finite direct-sum algebra
$G_n=P\oplus P$, with the sum trace. The inequalities
\[
 \|y\|_p\le\|f_ny\|_p+\|q_nyf_n\|_p\le2\|y\|_p
\]
and Lemma~\ref{lem:row} give the required uniform embedding bounds.

For $1<p<\infty$, denote by $\iota_r,\iota_c$ the isometric
identifications of the row and column sequence spaces with the
corresponding subspaces of $L_p(N)$. With $K_r,K_c$ the
retractions from the lemma, put
\[
 \Psi_n(z,w)=\iota_rK_r(z)+q_n\iota_cK_c(w).
\]
Then $\Psi_n$ takes values in $H_{n,p}$, has uniformly bounded
norm, and satisfies $\Psi_n\Phi_n=I_{H_{n,p}}$.
For example, $\|\Phi_n\|\le4$, $\|\Psi_n\|\le4$, and
$\|D_n\|\le2$ suffice. Hence
\[
                         D_n=\Psi_n(\Phi_nD_n)
\]
is a factorization through the full space $L_p(G_n)$, with
bounds independent of $n$. If a tracial state is required, normalize
the sum trace and rescale the two factors inversely.
\end{proof}

\section{Non-embedding of preduals}\label{sec:endpoint}
\subsection{Local compression}\label{sec:tail}

\begin{lemma}\label{lem:tail}
Let $A,B$ be von Neumann algebras, let $u:A\to B$ be an ultraweakly continuous bounded complex-linear map, and let $q_n\in A$ be projections decreasing strongly to zero. For every nonzero projection $e\in B$ and every $\varepsilon>0$, there are a nonzero projection $f\le e$ and an integer $n$ such that
\begin{equation}\label{eq:tail}
 \sup\{\|f u(x)f\|:x\in q_nAq_n,\ \|x\|\le1\}<\varepsilon.
\end{equation}
Neither positivity nor complete boundedness of $u$ is assumed.
\end{lemma}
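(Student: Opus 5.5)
The plan is to argue by contradiction, with a gliding-hump construction that plays the ultraweak continuity of $u$ against the operator-norm lower bounds which would hold if \eqref{eq:tail} failed. Suppose that for some nonzero $e\in B$ and $\varepsilon>0$ the statement fails. Then for every nonzero projection $f\le e$ and every $n$ there is $x\in q_nAq_n$ with $\|x\|\le1$ and $\|fu(x)f\|\ge\varepsilon$.

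The first step localizes such $x$ to finite blocks. Put $p_{n,m}=q_n-q_m$ for $m>n$. Then $p_{n,m}xp_{n,m}\to x$ strongly as $m\to\infty$, hence ultraweakly. Normality of $u$ gives $fu(p_{n,m}xp_{n,m})f\to fu(x)f$ ultraweakly. The operator norm is ultraweakly lower semicontinuous, so for large $m$ we get $\|fu(p_{n,m}xp_{n,m})f\|>\varepsilon/2$. We may therefore always choose witnesses supported in mutually orthogonal blocks $p_k=q_{n_k}-q_{n_{k+1}}$. For any signs, or any unimodular scalars $\theta_k$, the sum $\sum_k\theta_kx_k$ then has norm at most $1$ and converges strongly. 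Ultraweak continuity also shows that the partial sums $y_N=\sum_{k\le N}\theta_kx_k$ satisfy $u(y_N)\to u(y)$ ultraweakly.

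The second step runs the hump inductively while shrinking the projection. At stage $k$ we have a nonzero $f_{k-1}\le e$ and a unit vector $\xi_{k-1}\in f_{k-1}H$, where $B\subseteq B(H)$. The choices are arranged so that the diagonal contributions accumulate:
\[
\Rea\langle u(x_j)\xi,\xi\rangle\gtrsim\varepsilon \qquad (j<k)
\]
for the relevant $\xi$, after rotating each $x_j$ by a unimodular scalar. We then pass to the tail. The normal functional $\omega_\xi\circ u$ satisfies $\|q_n(\omega_\xi\circ u)q_n\|\to0$, and similarly for the finitely many off-diagonal functionals built from vectors chosen so far. Hence for $n_k$ large, every element of $q_{n_k}Aq_{n_k}$ contributes negligibly when tested against these vectors. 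Next we apply the failure hypothesis to a suitable subprojection $f_k\le f_{k-1}$ to obtain $x_k$ and a unit vector $\xi_k\in f_kH$ with $\Rea\langle u(x_k)\xi_k,\xi_k\rangle\ge\varepsilon/3$ after rotation. The subprojection is chosen, for example, as a spectral projection of $\Rea\, f_{k-1}u(y_{k-1})f_{k-1}$ on which the earlier sum stays large.

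The goal is to reach, for some unit vector $\xi$, an estimate
\[
\Rea\langle u(y_N)\xi,\xi\rangle\ge cN\varepsilon
\]
with a fixed constant $c>0$, while $\|y_N\|\le1$. This contradicts $\|u(y_N)\|\le\|u\|$ once $N>\|u\|/(c\varepsilon)$.

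I expect the main obstacle to be the shrinking step. Smallness on the tail is only available functional by functional, since $u$ is merely normal and not completely bounded or positive. The failure hypothesis, however, only supplies operator-norm lower bounds on compressions by $f$. The spectral subprojection $f_k$ must therefore keep the accumulated quadratic form large uniformly on $f_kH$, not just at one vector. At the same time, the new witness vector $\xi_k$ must not destroy the earlier terms. My first attempt is to keep the earlier terms large on all of $f_kH$ via the spectral choice. Tail terms are then controlled only at the finitely many chosen vectors, using $\|q_n\psi q_n\|\to0$ for each normal $\psi$. If this interplay fails, the fallback is a diagonal Baire-category argument on the closed sets $\{f:\ \sup_{x\in q_nAq_n,\|x\|\le1}\|fu(x)f\|\le\varepsilon\}$, indexed by $n$, in a suitable parametrization of subprojections of $e$.
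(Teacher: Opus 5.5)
Your Step~1 is correct. It is also slightly more economical than the paper's truncation. Lower semicontinuity of the norm under ultraweak convergence lets you cut a witness down to a finite block while keeping $\|fu(\cdot)f\|>\varepsilon/2$ for the \emph{same} $f$. The paper instead first passes to a spectral projection $r$ and then shrinks a second time, using a normal state supported under $r$, to absorb the truncation error.

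The gap is Step~2, which you never close. You treat the spectral choice of $f_k$ only as a first attempt that might fail, and the Baire-category fallback is not an argument: nothing forces those sets to cover a Baire space of subprojections, and that covering is essentially the statement being proved. You correctly note that $f_k$ must make the earlier contributions large uniformly on $f_kH$. What you are missing is that a spectral projection does exactly this, and nothing further can go wrong. If $f\,\Rea u(z)\,f\ge cf$, then $f'\,\Rea u(z)\,f'\ge cf'$ for every projection $f'\le f$, simply by compressing. You only ever evaluate the finite sums $y_N$, which are contractions, so there are no tail terms. Later choices cannot spoil earlier ones: they only shrink a projection on which the earlier inequalities already hold uniformly, so any later witness vector automatically sees them. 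The estimates $\|q_n(\omega_\xi\circ u)q_n\|\to0$ are therefore superfluous. They also control the wrong quantity: they make new terms small at old vectors, whereas accumulation needs all terms large at one final vector. The vectors $\xi_k$ can be dropped.

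To close the argument, start from a nonzero $f_{k-1}\le e$ and an index $n_k$.
\begin{itemize}
\item Pick a contraction $x\in q_{n_k}Aq_{n_k}$ with $\|f_{k-1}u(x)f_{k-1}\|>\varepsilon/2$.
\item By Step~1, truncate it to a contraction $x_k$ supported in $q_{n_k}-q_{n_{k+1}}$ with the same bound.
\item Using $\|a\|\le\|\Rea a\|+\|\Ima a\|$, multiply $x_k$ by one of $\pm1,\pm i$ so that $h_k=f_{k-1}\,\Rea u(x_k)\,f_{k-1}$ has spectrum meeting $(\varepsilon/4,\infty)$.
\item Let $f_k=1_{[\varepsilon/4,\infty)}(h_k)$. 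This is a nonzero projection below $f_{k-1}$ with $f_k\,\Rea u(x_k)\,f_k\ge(\varepsilon/4)f_k$.
\end{itemize}
By the compression remark, for $y_N=\sum_{k\le N}x_k$ we get
\[
 f_N\,\Rea u(y_N)\,f_N=\sum_{k=1}^N f_N\,\Rea u(x_k)\,f_N\ge\frac{N\varepsilon}{4}\,f_N,
\]
while $\|y_N\|\le1$. Hence $\|u\|\ge N\varepsilon/4$ for every $N$, a contradiction.

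This is the paper's proof, which records the per-term inequalities $f_k\,\Rea u(x_i)\,f_k\ge\delta f_k$ for $i\le k$. Your variant, using spectral projections of the accumulated sum $\Rea f_{k-1}u(y_{k-1})f_{k-1}$, also works, provided you check nonvanishing via the witness vector. Finally, $\|fu(x)f\|>\varepsilon/2$ only yields a real-part bound of order $\varepsilon/4$, through the numerical radius, not the $\varepsilon/3$ you wrote; this is harmless.
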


\begin{proof}
Suppose otherwise. Then there are $e\ne0$ and $\eta>0$ such that the supremum in \eqref{eq:tail} is at least $\eta$ for every nonzero $f\le e$ and every $n$. Fix $0<\delta<\eta/8$.

We recursively construct integers $n_1<n_2<\cdots$, nonzero projections
\[
 e=f_0\ge f_1\ge f_2\ge\cdots,
\]
and contractions $x_k\in(q_{n_k}-q_{n_{k+1}})A(q_{n_k}-q_{n_{k+1}})$ such that
\begin{equation}\label{eq:preserved}
 f_k\Rea u(x_i)f_k\ge\delta f_k\qquad(1\le i\le k).
\end{equation}
Set $n_1=1$. Given $f_{k-1}$ and $n_k$, choose a contraction $x\in q_{n_k}Aq_{n_k}$ with
\[
 \|f_{k-1}u(x)f_{k-1}\|>4\delta.
\]
For every operator $a$, $\|a\|\le\|\Rea a\|+\|\Ima a\|$.
Multiplying $x$ by one of $1,-1,i,-i$, we may therefore ensure
that the positive spectral supremum of
$f_{k-1}\Rea u(x)f_{k-1}$ exceeds $2\delta$.
The spectral projection of this self-adjoint operator for
$[2\delta,\infty)$, computed in $f_{k-1}Bf_{k-1}$, is a nonzero
projection $r\le f_{k-1}$ satisfying
\begin{equation}\label{eq:signal}
 r\Rea u(x)r\ge2\delta r.
\end{equation}

For $m>n_k$, put
\[
 x^{(m)}=(q_{n_k}-q_m)x(q_{n_k}-q_m).
\]
The contractions $x^{(m)}$ converge strongly*, and hence ultraweakly, to $x$. Choose a normal state $\psi$ on $B$ supported under $r$, so that $\psi(r)=1$. Normality of $u$ gives
\[
 \psi\bigl(r\Rea u(x-x^{(m)})r\bigr)\longrightarrow0.
\]
Choose $m>n_k$ for which this real number is less than $\delta$, and set
\[
 b=r\Rea u(x-x^{(m)})r\in(rBr)_{\mathrm{sa}}.
\]
The spectral projection
\[
 f_k=1_{(-\infty,\delta]}^{\,rBr}(b)
\]
is nonzero: otherwise $b\ge\delta r$, contradicting $\psi(b)<\delta$. Notice that the spectral projection is computed inside the corner $rBr$, whose identity is $r$.

Set $n_{k+1}=m$ and $x_k=x^{(m)}$. Compressing \eqref{eq:signal} further gives
\[
 f_k\Rea u(x_k)f_k
 =f_k\Rea u(x)f_k-f_kbf_k
 \ge\delta f_k.
\]
Since $f_k\le f_{k-1}$, all previous inequalities in \eqref{eq:preserved} remain valid. 

The input projections $q_{n_k}-q_{n_{k+1}}$ are pairwise orthogonal. Hence, for every finite $K$,
\[
 \left\|\sum_{k=1}^Kx_k\right\|\le1.
\]
On the other hand, \eqref{eq:preserved} implies
\[
 f_K\Rea u\left(\sum_{k=1}^Kx_k\right)f_K\ge K\delta f_K.
\]
Since $f_K$ is a nonzero projection, $\|u\|\ge K\delta$. Taking $K>\|u\|/\delta$ gives a contradiction.
\end{proof}

\subsection{Proof of the predual theorem}\label{sec:proof}

\begin{theorem}\label{thm:main}
Let $M$ be a nonzero type III von Neumann algebra with separable predual,
let $(F,\tau_F)$ be a finite von Neumann algebra with faithful normal
tracial state, and put
\[
                     N=F\overline\otimes B(\ell_2).
\]
There is no bounded complex-linear map $T:M_*\to N_*$ that is bounded
below. In particular, $M_*$ is not isomorphic to a subspace of $N_*$.
\end{theorem}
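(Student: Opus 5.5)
Suppose, for a contradiction, that $T:M_*\to N_*$ is bounded and bounded below, say $\|T\omega\|\ge c\|\omega\|$. Its adjoint $u=T^*:N\to M$ is then ultraweakly continuous and bounded. Bounded below for $T$ means $u$ is onto up to constant $c$, i.e. $u(\mathrm{Ball}\,N)\supseteq c\,\mathrm{Ball}\,M$ weak$^*$-closure-wise. Duality then gives $c\|\omega\|\le\sup\{|\omega(u(y))|:\|y\|\le1\}$.

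\textbf{Step 1: compress into the tail.} Apply Lemma~\ref{lem:tail} to $u$, with $A=N$, $B=M$, $q_n=1-f_n$ as in Proposition~\ref{lem:head}. Take $e=1_M$ and $\varepsilon=c/2$. This yields a nonzero projection $f\in M$ and an $n$ with $\|fu(x)f\|<c/2$ for all contractions $x\in q_nNq_n$. Since $M$ is type III, $fMf$ is a nonzero type III algebra with separable predual. It is countably decomposable and properly infinite, so $(fMf)_*$ contains $\ell_1$-copies of $S_1$ isometrically.

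\textbf{Step 2: transfer to $H_{n,1}$.} Consider $T_f=T\circ i_f:(fMf)_*\to N_*$, which is still bounded below by $c$. Let $D_n y=y-q_nyq_n$ on $L_1(N)$; it is bounded by $2$. For $\omega\in(fMf)_*$ one has $(q_n(T_f\omega)q_n)(x)=\omega(fu(q_nxq_n)f)$. The tail estimate therefore gives $\|q_n(T_f\omega)q_n\|_1\le (c/2)\|\omega\|$. Hence $\|D_nT_f\omega\|_1\ge\|T_f\omega\|-(c/2)\|\omega\|\ge (c/2)\|\omega\|$. So $D_nT_f$ embeds $(fMf)_*$ into $H_{n,1}$. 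By Proposition~\ref{lem:head} with $p=1$, the space $H_{n,1}$ embeds into a finite tracial $L_1$-space $L_1(G_n)$.

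\textbf{Step 3: contradiction.} The composite embeds $(fMf)_*$, and hence $S_1$, into $L_1(G_n)$. This contradicts Theorem~\ref{thm:hrs} with $p=1$.

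The main point to check carefully is the identification in Step 2. One must verify that the predual pairing gives $q_n(T\omega)q_n$ as the functional $x\mapsto\omega(f u(q_nxq_n)f)$ restricted appropriately. One must also check that its $L_1$-norm is bounded by the supremum over $x\in q_nNq_n$ from \eqref{eq:tail}, using that $q_nL_1(N)q_n=L_1(q_nNq_n)$ isometrically. This uses normality of $u$, which holds since $u$ is an adjoint. It also uses the fact that compression by $f$ on the $M$ side corresponds to restricting $\omega$ to $fMf$. Everything else is a direct assembly of Lemma~\ref{lem:tail}, Proposition~\ref{lem:head} and Theorem~\ref{thm:hrs}.
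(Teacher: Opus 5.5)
Your proposal is correct and is essentially the paper's proof: you apply Lemma~\ref{lem:tail} to $u=T^*$ with $\varepsilon=c/2$, derive the estimate $\|q_n(Ti_f\omega)q_n\|_1<(c/2)\|\omega\|$, embed $(fMf)_*$ into $H_{n,1}$ and then into a finite tracial $L_1$-space by Proposition~\ref{lem:head}, and reach the contradiction with Theorem~\ref{thm:hrs} through $S_1\subset(fMf)_*$. The only differences are that the paper phrases the estimate as $\|Q_nTi_f\|<c/2$ via the adjoint identity $(Q_nTi_f)^*=fT^*(q_n\,\cdot\,q_n)f$ where you check it pointwise through the trace pairing, and that your opening remark about $u$ being onto ``weak$^*$-closure-wise'' is unused (it holds without closure by Hahn--Banach) and can be dropped.
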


\begin{proof}[Proof of Theorem~\ref{thm:main}]
Suppose that $T:M_*\to N_*=L_1(N,\tau_N)$ is bounded and satisfies
\[
 \|T\omega\|_1\ge c\|\omega\|\qquad(\omega\in M_*)
\]
for some $c>0$. Its adjoint $u=T^*:N\to M$ is bounded and ultraweakly continuous.

Use $f_n=1\otimes P_n$ as in Proposition~\ref{lem:head} and set $q_n=1-f_n$. By Lemma~\ref{lem:tail}, there are $n$ and a nonzero projection $f\in M$ such that
\begin{equation}\label{eq:dual-small}
 \|x\mapsto fT^*(q_nxq_n)f\|_{N\to fMf}<c/2.
\end{equation}
The canonical inclusion $i_f:(fMf)_*\hookrightarrow M_*$ is
isometric and is the preadjoint of $a\mapsto faf$. If
\[
 Q_n y=q_nyq_n\qquad(y\in L_1(N)),
\]
then the map in \eqref{eq:dual-small} is exactly $(Q_nTi_f)^*$. Thus
\[
 \|Q_nTi_f\|<c/2.
\]
For every $\omega\in(fMf)_*$, we obtain
\[
 \|(I-Q_n)Ti_f\omega\|_1
 \ge\|Ti_f\omega\|_1-\|Q_nTi_f\omega\|_1
 \ge(c/2)\|\omega\|.
\]
The image of $(I-Q_n)Ti_f$ lies in $H_{n,1}$, since
$q_n(I-Q_n)yq_n=0$. Proposition~\ref{lem:head} therefore embeds
$(fMf)_*$ into a finite tracial $L_1$-space.

The corner $fMf$ is nonzero, type III and countably decomposable.
The predual $(fMf)_*$ therefore contains $S_1$ isometrically, by the standard
fact recalled in Section~\ref{sec:prelim}. Composing the two embeddings would embed $S_1$ into
a finite tracial $L_1$-space, contrary to Theorem~\ref{thm:hrs}.
\end{proof}

\section{Complemented embeddings in the Banach range}\label{sec:banach}
\subsection{Equiintegrability}\label{sec:ui}

In this subsection and Subsection~\ref{sec:averaging}, we assume
$1<p<2$. We use $L_p(M)$ for the Haagerup
space associated with a von Neumann algebra $M$. A bounded subset
$K\subset L_p(M)$ is \emph{$p$-equiintegrable} if
\[
 \lim_\alpha\sup_{x\in K}\|e_\alpha x e_\alpha\|_p=0
\]
whenever $(e_\alpha)$ is a net of projections in $M$ converging
ultraweakly to zero. By \cite[Corollary~4.9]{RX}, this condition can
equivalently be tested on pairwise orthogonal sequences of projections.
The definition involves the ambient algebra. The following criterion implies that bounded linear maps preserve
$p$-equiintegrability in the present range of exponents.

For a bounded set $K$ in a Banach space, set
\[
 r_n(K)=n^{-1/p}
 \sup_{x_1,\ldots,x_n\in K}
 \mathbb E\left\|\sum_{j=1}^n\varepsilon_jx_j\right\|,
\]
where $\varepsilon_1,\ldots,\varepsilon_n$ are independent random signs.

\begin{lemma}\label{lem:intrinsic-ui}
Let $M$ be a von Neumann algebra and let $K\subset L_p(M)$ be bounded.
Then $K$ is $p$-equiintegrable if and only if
\[
 \lim_{n\to\infty}r_n(K)=0.
\]
Consequently, if $N$ is another von Neumann algebra and
$T:L_p(M)\to L_p(N)$ is bounded and linear, then $T(K)$ is
$p$-equiintegrable whenever $K$ is $p$-equiintegrable.
\end{lemma}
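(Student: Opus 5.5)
The plan is to compare the Rademacher averages defining $r_n(K)$ with two standard facts about $L_p$ for $1<p<2$. The first is the noncommutative type $p$ inequality. The second is the structure of $p$-equiintegrable sets from \cite{RX}. The "consequently" part is immediate once the equivalence is proved: linearity gives $\mathbb E\|\sum\varepsilon_jTx_j\|\le\|T\|\,\mathbb E\|\sum\varepsilon_jx_j\|$, so $r_n(T(K))\le\|T\|\,r_n(K)\to0$, and the criterion applied in $L_p(N)$ yields equiintegrability of $T(K)$. The point is that $r_n$ is intrinsic to the Banach space, while equiintegrability refers to the ambient algebra.

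\emph{Necessity of $r_n\to0$ fails $\Rightarrow$ not equiintegrable, i.e.\ if $K$ is not equiintegrable then $r_n(K)\not\to0$.} Suppose $K$ is not $p$-equiintegrable. By \cite[Corollary~4.9]{RX}, there are $\eta>0$, pairwise orthogonal projections $e_k\in M$ and elements $x_k\in K$ with $\|e_kx_ke_k\|_p\ge\eta$.
\begin{itemize}
\item Fix $n$, put $e=\sum_{j\le n}e_j$, and consider $y=\sum_{j\le n}\varepsilon_jx_j$.
\item The diagonal pinching $z\mapsto\sum_{j\le n}e_jze_j$ is an average of conjugations by the unitaries $\sum_j\pm e_j\pm(1-e)$ followed by compression by $e$. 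It is therefore contractive on the Haagerup $L_p$.
\item Off-diagonal terms disappear under the pinching, so $\|y\|_p\ge\|\sum_j\varepsilon_je_jx_je_j\|_p$.
\item For disjoint two-sided supports this norm equals $(\sum_j\|e_jx_je_j\|_p^p)^{1/p}\ge\eta n^{1/p}$.
\end{itemize}
Hence $r_n(K)\ge\eta$ for every $n$.

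\emph{Equiintegrable $\Rightarrow r_n\to0$.} This direction is the main obstacle, because one needs a usable description of equiintegrable sets in the Haagerup setting. I would first reduce to a $\sigma$-finite setting: a bounded set is countable up to approximation, so its supports lie under a $\sigma$-finite projection. Fix there a faithful normal state with density $D$. The Raynaud--Xu characterization should give the following: for every $\varepsilon>0$ there is $C$ such that each $x\in K$ decomposes as $x=a+b$, with $\|b\|_p\le\varepsilon$ and $a=D^{1/2p}yD^{1/2p}$ for some $\|y\|_\infty\le C$.

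For the bounded parts, write $1/p=1/2+2\theta$, so that
\[
a=D^{\theta}\bigl(D^{1/4}yD^{1/4}\bigr)D^{\theta}.
\]
Hölder's inequality gives $\|\sum\varepsilon_ja_j\|_p\le\|\sum\varepsilon_jD^{1/4}y_jD^{1/4}\|_2$. Orthogonality of the signs in $L_2$ bounds the expectation of the latter by $C\sqrt n$.

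For the small parts, the type $p$ constant $C_p$ of $L_p(M)$ gives $\mathbb E\|\sum\varepsilon_jb_j\|_p\le C_p\varepsilon n^{1/p}$.

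Combining the two estimates,
\[
r_n(K)\le Cn^{1/2-1/p}+C_p\varepsilon .
\]
Since $p<2$, the first term tends to $0$, so $\limsup_n r_n(K)\le C_p\varepsilon$ for every $\varepsilon>0$. The step I expect to need the most care is extracting the decomposition $x=a+b$ from \cite{RX} with constants uniform over $K$. If that form is not directly available, I would instead truncate $x$ spectrally relative to $D$, using the equiintegrability condition on the projections where the relative density is large.
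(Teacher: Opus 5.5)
\textbf{The forward implication has a genuine gap.} The paper does not prove this direction; it cites \cite[Lemma~5.2]{RX}. Your argument rests on a splitting $x=D^{1/2p}yD^{1/2p}+b$ with $\|y\|_\infty\le C$ and $\|b\|_p\le\varepsilon$, uniform over $K$. That splitting is false for two-sided $p$-equiintegrable sets outside the finite case.

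Take $M=B(\ell_2)$, so that $L_p(M)=S_p$, and $K=\{e_{n1}:n\ge1\}$. This set lies in the unitary orbit of $e_{11}$. It is $p$-equiintegrable, since $\|fe_{n1}f\|_p\le\|f\delta_1\|$ for every projection $f$. For any faithful normal state with density $D$ and any $\|y\|_\infty\le C$,
\[
 \|e_{n1}-D^{1/2p}yD^{1/2p}\|_p
 \ge\bigl|\langle(e_{n1}-D^{1/2p}yD^{1/2p})\delta_1,\delta_n\rangle\bigr|
 \ge1-C\|D^{1/2p}\delta_1\|\,\|D^{1/2p}\delta_n\|.
\]
Here $\|D^{1/2p}\delta_n\|^2=\langle D^{1/p}\delta_n,\delta_n\rangle\to0$ because $D^{1/p}$ is compact. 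So no choice of $C$ works once $\varepsilon<1$, even though $r_n(K)\le n^{1/2-1/p}\to0$.

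The reason is that two-sided equiintegrability controls only $exe$. It permits the left supports of elements $ux$ to escape to infinity, which bounded two-sided weights $D^{1/2p}(\cdot)D^{1/2p}$ cannot reproduce. This is exactly the situation in which the paper applies the lemma: unitary orbits $\{ux\}$ in properly infinite algebras.

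Your fallback does not help. Any approximant of the form $D^{1/2p}yD^{1/2p}$ is excluded by the same estimate. Moreover, for type III $M$ the spectral projections of the Haagerup density $D$ do not lie in $M$, so the hypothesis says nothing directly about them. A H\"older-plus-$L_2$ argument would need pieces with one-sided weights, such as $D^{1/p}y$ and $yD^{1/p}$; these obey the same bound, via $\|\sum_j\varepsilon_jD^{1/p}y_j\|_p\le\|\sum_j\varepsilon_jD^{1/2}y_j\|_2$. Extracting such a splitting uniformly from two-sided equiintegrability is a substantive step that your proposal does not supply.

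Separately, a bounded set need not be separable, so ``countable up to approximation'' is wrong. The correct $\sigma$-finite reduction passes to the countable set of finite selections witnessing a failure of $r_n\to0$, as the paper remarks.

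\textbf{The converse direction} follows the paper's route, \cite[Corollary~4.9]{RX} plus pinching, but the step ``off-diagonal terms disappear, so $\|y\|_p\ge\|\sum_j\varepsilon_je_jx_je_j\|_p$'' is wrong. The pinching of $y_\varepsilon=\sum_j\varepsilon_jx_j$ is $\sum_{i,j}\varepsilon_je_ix_je_i$, and the cross terms $e_ix_je_i$ with $i\ne j$ remain. The pointwise inequality can fail: in $\ell_\infty^2$, with $x_1=e_1-e_2=-x_2$ and $\varepsilon=(1,1)$, one has $y_\varepsilon=0$.

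The repair is the paper's averaging. Let $P_n$ be the pinching and multiply by the unitary $w_\varepsilon=\sum_i\varepsilon_ie_i+(1-\sum_ie_i)$. Since $\mathbb E[\varepsilon_i\varepsilon_j]=\delta_{ij}$,
\[
 \mathbb E\bigl[w_\varepsilon P_n(y_\varepsilon)\bigr]=\sum_je_jx_je_j,
\]
and hence $\eta n^{1/p}\le\mathbb E\|y_\varepsilon\|_p$. The inequality holds only on average, which is all that $r_n$ requires. Your derivation of the final assertion from the criterion, via $r_n(T(K))\le\|T\|r_n(K)$, coincides with the paper's.
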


\begin{proof}
The forward implication is \cite[Lemma~5.2]{RX}. The formulation in~\cite{RX} uses a sequence, but applies to every bounded
$p$-equiintegrable set: a failure of the asserted limit would be
witnessed by a countable collection of finite selections from the set.

Conversely, suppose that $K$ is not $p$-equiintegrable.
By \cite[Corollary~4.9]{RX}, there are pairwise orthogonal
projections $e_j\in M$, elements $x_j\in K$ and $\delta>0$ such that
\[
 \|e_jx_je_j\|_p\ge\delta,\qquad j\ge1.
\]
For each $n$, the finite pinching map
\[
 P_n(y)=\sum_{j=1}^n e_jye_j
\]
is contractive on $L_p(M)$. Contractivity follows from the standard pinching inequality
for noncommutative $L_p$-spaces~\cite[Sections~1 and 3]{PX}.
Put
\[
 w_\varepsilon=\sum_{j=1}^n\varepsilon_je_j+
                  \left(1-\sum_{j=1}^n e_j\right).
\]
The operator $w_\varepsilon$ is unitary, and
\[
 \mathbb E\left[
   w_\varepsilon P_n\left(\sum_{j=1}^n\varepsilon_jx_j\right)
 \right]
 =\sum_{j=1}^n e_jx_je_j.
\]
Since multiplication by $w_\varepsilon$ is isometric, we obtain
\[
 \delta n^{1/p}
 \le
 \left\|\sum_{j=1}^n e_jx_je_j\right\|_p
 \le
 \mathbb E\left\|\sum_{j=1}^n\varepsilon_jx_j\right\|_p.
\]
Here the first inequality follows from the orthogonality of the
two-sided supports of $e_jx_je_j$. Thus $r_n(K)\ge\delta$.

Finally, $r_n(T(K))\le\|T\|r_n(K)$, which proves the last assertion.
\end{proof}

\begin{lemma}\label{lem:unitary-orbits-ui}
Let $M$ be a von Neumann algebra and let $x\in L_p(M)$. The sets
\[
 \{ux:u\in\mathcal U(M)\},
 \qquad
 \{xu:u\in\mathcal U(M)\}
\]
are $p$-equiintegrable.
\end{lemma}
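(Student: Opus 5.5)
We prove the statement directly from the definition, using the orthogonal-sequence criterion of \cite[Corollary~4.9]{RX}. Both sets are bounded by $\|x\|_p$. By symmetry (adjoints) it suffices to treat $\{ux\}$: the map $y\mapsto y^*$ is an isometry of $L_p(M)$, it carries $\{xu\}$ onto $\{u^*x^*\}$, and it preserves the quantities $\|e y e\|_p$, since $(eye)^*=ey^*e$.

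For the left orbit, let $e_\alpha\to0$ ultraweakly. For every unitary $u$,
\[
 \|e_\alpha u x e_\alpha\|_p\le\|e_\alpha u\|_\infty\,\|x e_\alpha\|_p\le\|xe_\alpha\|_p .
\]
So it suffices to prove that $\|xe_\alpha\|_p\to0$; the bound is then uniform in $u$. We compute
\[
 \|xe_\alpha\|_p^p=\bigl\|e_\alpha|x|^2e_\alpha\bigr\|_{p/2}^{p/2}.
\]
Here $p/2<1$, so $L_{p/2}$ is only quasi-normed, and it is cleaner to avoid it. Instead we use density. Write $x=h_\varphi^{1/p}a+r$, where $\varphi$ is a fixed faithful normal weight, or a state on the $\sigma$-finite support, $a\in M$, and $\|r\|_p$ is small. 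We may assume $\varphi$ is a state on the left support of $x$, for instance via the polar decomposition $x=v|x|$ with $|x|^p=h_\psi$.

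The cleanest choice is the \emph{right} factorization $x=y\,h_\psi^{1/p}$ with $y\in M$ a partial isometry, where $\psi$ is the normal positive functional with density $|x|^p$ (so $x=v h_\psi^{1/p}$). Then
\[
 \|xe_\alpha\|_p\le\|h_\psi^{1/p}e_\alpha\|_p,
 \qquad
 \|h_\psi^{1/p}e_\alpha\|_p^p=\|e_\alpha h_\psi^{2/p}e_\alpha\|_{p/2}^{p/2}.
\]
Approximating $h_\psi^{1/p}$ in $L_p$ by elements of the form $a h_\psi^{1/p}$ does not help directly. Instead we use the fact that $\{e_\alpha h_\psi^{1/p}: \text{proj}\}$ behaves well: by the H\"older-type estimate \cite{PX},
\[
 \|e h_\psi^{1/p}\|_p^p=\|h_\psi^{1/p}eh_\psi^{1/p}\|_{1}^{\,\cdot}
\]
holds when $p=2$. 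For general $p$ we use interpolation or the standard inequality $\|e h^{1/p}\|_p\le\psi(e)^{1/p}$ for $1\le p\le 2$, or its known variant. This is the main obstacle: controlling $\|e h_\psi^{1/p}\|_p$ by $\psi(e)^{\theta}$.

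I would first try the following route. $\|eh^{1/p}\|_p^p=\tau\bigl((h^{1/p}eh^{1/p})^{p/2}\bigr)$, and by concavity of $t\mapsto t^{p/2}$ (Jensen/Hansen-type trace inequality for $p/2\le1$) this is at most $\psi(e)^{p/2}\psi(1)^{1-p/2}$, by H\"older with exponents $2/p$ and its conjugate in Haagerup $L_1$. More concretely, $\|eh^{1/p}\|_p\le\|eh^{1/2}\|_2^{\theta}\,\|h^{1/p}\|_p^{1-\theta}$ by complex interpolation of the map $z\mapsto eh^{z}$ on the strip, with $\|eh^{1/2}\|_2^2=\psi(e)$.

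Then $\psi(e_\alpha)\to0$ by normality of $\psi$, which gives $\sup_u\|e_\alpha uxe_\alpha\|_p\to0$. If the interpolation step resists, I fall back on the orthogonal-sequence test of \cite[Corollary~4.9]{RX}. For pairwise orthogonal $e_j$,
\[
 \sum_j\|xe_j\|_p^p\le\|x\|_p^p,
\]
by the reverse pinching/column estimate valid for $p\le2$: $\|\sum_j x e_j\|_p^p\ge\sum_j\|xe_j\|_p^p$ for $p\le 2$. This forces $\|xe_j\|_p\to0$, which is all the criterion needs.
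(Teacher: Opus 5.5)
Your argument is correct once it is trimmed to its H\"older line. The reduction is the paper's: for the left orbit, $\|e_\alpha uxe_\alpha\|_p\le\|xe_\alpha\|_p$, and the right orbit follows by taking adjoints. The difference is the last step. The paper quotes the continuity of multiplication by bounded strongly convergent nets on $L_p(M)$ \cite[Lemma~2.3]{JS}, since projections tending ultraweakly to $0$ tend strongly to $0$; that citation covers every finite $p$. You prove $\|xe_\alpha\|_p\to0$ directly. Write $x=v|x|$, $h=|x|^p=h_\psi$, and $1/p=1/2+1/r$. Then
\[
 \|xe\|_p\le\|eh^{1/p}\|_p=\bigl\|(eh^{1/2})h^{1/r}\bigr\|_p
 \le\|eh^{1/2}\|_2\,\|h^{1/r}\|_r=\psi(e)^{1/2}\|x\|_p^{1-p/2},
\]
and $\psi(e_\alpha)\to0$ because $\psi$ is normal. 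This is self-contained and gives an explicit modulus, uniform over the orbit. It uses only ultraweak convergence. It is tied to $p\le2$, which is harmless because $1<p<2$ is in force in that subsection. No concavity or Jensen-type trace inequality is involved, so present the step as this single H\"older estimate.

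Delete the alternative routes, which are wrong as written.
The ``standard inequality'' $\|eh^{1/p}\|_p\le\psi(e)^{1/p}$ holds for $p\ge2$ but fails for $1\le p<2$. In $M_2$, take $h=\mathrm{diag}(1,0)$ and $e$ the projection onto $(1,1)/\sqrt2$. Then $\|eh\|_p=2^{-1/2}>2^{-1/p}=\psi(e)^{1/p}$.
The interpolation display is not invariant under $h\mapsto\lambda h$: its two sides scale like $\lambda^{1/p}$ and $\lambda^{\theta/2+(1-\theta)/p}$. So it can hold at most after normalizing $\psi(1)=1$.
The fallback inequality $\sum_j\|xe_j\|_p^p\le\|x\|_p^p$ goes the wrong way for $p<2$. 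Take the rank-one operator $x=\langle\,\cdot\,,\xi\rangle\eta$ in $M_n$, with $\xi=n^{-1/2}(1,\dots,1)$ and $e_j$ the coordinate projections. Then $\sum_j\|xe_j\|_p^p=n^{1-p/2}\|x\|_p^p$.
The conclusion $\|xe_j\|_p\to0$ that the fallback was meant to give is true, but only via the H\"older estimate above or the cited continuity result.
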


\begin{proof}
If $e_\alpha\to0$ ultraweakly is a net of projections, then
\[
 \sup_{u\in\mathcal U(M)}
 \|e_\alpha uxe_\alpha\|_p
 \le \|xe_\alpha\|_p\longrightarrow0.
\]
The last convergence is the standard continuity of multiplication
by bounded strongly convergent nets on $L_p(M)$; see
\cite[Lemma~2.3]{JS}. The other assertion follows in the same way,
or by taking adjoints.
\end{proof}

For a finite algebra $(G,\tau)$ with a faithful normal tracial
state, we also use the finite-trace uniform-integrability
criterion~\cite[Corollary~2.7]{HRS}, together with the equivalence
of equiintegrability notions in~\cite[remarks following
Corollary~4.12]{RX}.
A bounded $p$-equiintegrable set $K\subset L_p(G,\tau)$ is uniformly
right equiintegrable on projections of sufficiently small trace.
In particular,
\begin{equation}\label{eq:finite-ui-tails}
 \lim_{R\to\infty}\sup_{y\in K}
 \|y-y1_{[0,R]}(|y|)\|_p=0.
\end{equation}
Indeed, if $C=\sup_{y\in K}\|y\|_p$, then the relevant spectral
projections satisfy
$\tau(1_{(R,\infty)}(|y|))\le C^pR^{-p}$.

\subsection{Averaging finite tracial factorizations}\label{sec:averaging}

We say that a bounded operator $A:X\to X$ \emph{factors through
a finite tracial $L_p$-space} if there are a finite von Neumann
algebra $(G,\tau)$ with a faithful normal tracial state and bounded
linear maps
\[
 B:X\longrightarrow L_p(G,\tau),\qquad
 S:L_p(G,\tau)\longrightarrow X
\]
such that $A=SB$. In particular, $S$ is defined on the whole
space $L_p(G,\tau)$.

We recall the part of the tracial ultraproduct construction that
will be used below. For finite tracial probability spaces
$(G_n,\tau_n)$ and a free ultrafilter $\mathcal U$ on $\mathbb N$,
let
\[
 Q=\prod_{\mathcal U}(G_n,\tau_n)
\]
be their tracial von Neumann algebra ultraproduct.
There is a canonical isometric inclusion
\begin{equation}\label{eq:tracial-ultra-inclusion}
 L_p(Q)\ \longrightarrow\
 \prod_{\mathcal U}L_p(G_n,\tau_n).
\end{equation}
Its range is the norm closure of the classes represented by
sequences $(z_n)$ with $\sup_n\|z_n\|_\infty<\infty$.
These are standard properties of the construction; see
\cite[Section~4, especially Lemma~4.1 and the preceding discussion]{JungeAW}.
Consequently, a bounded sequence $(y_n)$ represents an element
of this copy of $L_p(Q)$ whenever
\begin{equation}\label{eq:ultra-bounded-approx}
 \lim_{R\to\infty}\sup_n
 \left\|y_n-y_n1_{[0,R]}(|y_n|)\right\|_p=0.
\end{equation}
Indeed, the truncated representatives are uniformly bounded
elements of $G_n$.

\begin{proposition}\label{thm:average}
Let $M$ be a nonzero hyperfinite type III von Neumann algebra
with separable predual, let $1<p<2$, and put $X=L_p(M)$.
Suppose that $A:X\to X$ factors through a finite tracial
$L_p$-space. Then, for every $x\in X$,
\begin{equation}\label{eq:orbit-obstruction}
 \sup_{u\in\mathcal U(M)}\|(A-I)(ux)\|_p\ge\|x\|_p.
\end{equation}
\end{proposition}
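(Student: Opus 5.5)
We argue by contradiction. Fix $x\in X$ with $x\neq 0$ and suppose $\sup_{u}\|(A-I)(ux)\|_p=\theta<\|x\|_p$. Write $A=SB$ with $B:X\to L_p(G,\tau)$ and $S:L_p(G,\tau)\to X$. The plan is to average $A$ over unitary groups of an increasing sequence of finite-dimensional subalgebras exhausting $M$, which is possible because $M$ is hyperfinite. The averages should be maps that still factor through finite tracial $L_p$-spaces, now through ultraproducts, and that still approximate the identity on the orbit of $x$. We then want a weak limit to be a left $M$-module map $\widetilde A$ with $\|(\widetilde A-I)(x)\|_p\le\theta$. A left module map on $L_p(M)$ that is normal in the appropriate sense is right multiplication by something in the commutant, that is $\widetilde A(y)=y\cdot a$ for an $a$ realized through the right action. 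Factoring through finite tracial $L_p$ should force this map to vanish on a type III space, giving $\|x\|_p\le\theta$, a contradiction.

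\textbf{Step 1 (averaging).} Let $M_k\subset M$ be finite-dimensional subalgebras increasing with ultraweakly dense union, and let $dv$ be Haar measure on $\mathcal U(M_k)$. Put
\[
A_k(y)=\int_{\mathcal U(M_k)} v^*A(vy)\,dv .
\]
Then $\|(A_k-I)(ux)\|_p\le\theta$ for $u\in\mathcal U(M)$, since $vux$ ranges in the orbit. Also $A_k(wy)=wA_k(y)$ for $w\in\mathcal U(M_k)$.

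To get a factorization, discretize the Haar integral: approximate it by finite averages over $v_1,\dots,v_m$. Write the resulting map as $S^{(m)}B^{(m)}$ with
\[
B^{(m)}y=m^{-1/p}\bigl(B(v_jy)\bigr)_j\in L_p\Bigl(\bigoplus_{j=1}^m G\Bigr),\qquad S^{(m)}(z_j)_j=m^{1/p-1}\sum_j v_j^*S z_j .
\]
The difficulty is that the norms of these factors are not uniform in $m$. One would have $\|B^{(m)}\|\le\|B\|$, but $\|S^{(m)}\|$ could be as large as $m^{1-1/p}$. I would instead exploit equiintegrability. By Lemmas~\ref{lem:unitary-orbits-ui} and \ref{lem:intrinsic-ui}, the set $B(\mathcal U(M)x)$ is $p$-equiintegrable in $L_p(G)$. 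Hence by \eqref{eq:finite-ui-tails} it can be truncated uniformly, which places the relevant vectors in bounded parts of $G$. That in turn lets us pass to the tracial ultraproduct $Q$ via \eqref{eq:ultra-bounded-approx}.

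\textbf{Step 2 (limit).} Taking a weak limit along $k$, which is available since $X$ is reflexive, we obtain $\widetilde A$, bounded and commuting with the left action of $\bigcup_k M_k$, hence with that of $M$. It still satisfies $\|(\widetilde A-I)x\|_p\le\theta$. We also want $\widetilde A$ to factor as $\widetilde S\widetilde B$ through $L_p(Q)$, at least on the cyclic subspace $\overline{Mx}$.

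\textbf{Step 3 (vanishing).} Restrict to a corner where $x$ generates. A module map on $\overline{Mx}\cong L_p(M)e$ factoring through finite tracial $L_p$, combined with the Schatten copy $S_p\subset L_p(eMe)$ given by properly infinite type III corners, would yield an embedding of $S_p$ into $L_p$ of a finite algebra unless the map is zero on that corner. Theorem~\ref{thm:hrs} rules out such an embedding. So $\widetilde A=0$ there and $\|x\|_p\le\theta$.

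I expect Step 1 together with the passage to the ultraproduct to be the main obstacle: keeping the averaged factorizations uniformly bounded. Step 3 is also delicate, because a nonzero module map that is merely bounded need not be bounded below; one must extract an isomorphic copy of $S_p$ on which it is invertible, for instance via a spectral projection of the associated multiplier.
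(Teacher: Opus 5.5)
Your architecture is the paper's: Haar averages over $\mathcal U(M_k)$, equiintegrability to reach the tracial ultraproduct $Q$, a weak limit using reflexivity, identification of the limit as a right multiplier $y\mapsto yb$, and a spectral projection of $|b^*|$ giving a bounded-below map on $L_p(eMe)\supset S_p$. The gap is in Steps 1--2, at the point you flag yourself. The factorization $\widetilde A=\widetilde S\widetilde B$ through $L_p(Q)$, which is the heart of the argument, is only ``wanted'', and the reason you give for the difficulty is wrong.

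\textbf{The second factors are uniformly bounded.} Use the normalized trace on $\bigoplus_{j=1}^mG$ with factors $y\mapsto(B(v_jy))_j$ and $(z_j)_j\mapsto m^{-1}\sum_jv_j^*Sz_j$. H\"older gives
$\|m^{-1}\sum_jv_j^*Sz_j\|_p\le\|S\|\,m^{-1}\sum_j\|z_j\|_p\le\|S\|\bigl(m^{-1}\sum_j\|z_j\|_p^p\bigr)^{1/p}$.
With your normalization and the sum trace, the same computation gives $\|S^{(m)}\|\le\|S\|$. The paper avoids discretizing altogether. It factors $A_k$ exactly through $L_\infty(\mathcal U(M_k))\overline\otimes G$ via $B_ky(v)=B(vy)$ and $S_kf=\int v^*S(f(v))\,dv$, with the same Jensen-type estimate.

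\textbf{Equiintegrability does something else.} It could not have repaired a blow-up of $\|S^{(m)}\|$ in any case. Its real role is that the Banach ultraproduct $\prod_{\mathcal U}L_p(G_k)$ is not, in general, a finite tracial $L_p$-space. Uniform truncation of $\{B(uy):u\in\mathcal U(M)\}$ is what puts $[B_ky]_{\mathcal U}$ into the canonical copy of $L_p(Q)$. This truncation holds for every $y\in X$, not only your fixed $x$, so $\widetilde By=[B_ky]_{\mathcal U}$ is a bounded linear map from all of $X$ into $L_p(Q)$.

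\textbf{The construction of $\widetilde S$ is missing.} Define it on the whole Banach ultraproduct by sending $[z_k]_{\mathcal U}$ to the weak limit of $S_kz_k$ along $\mathcal U$. This is well defined, with norm at most $\|S\|$, precisely because $\sup_k\|S_k\|<\infty$ and $X$ is reflexive. Then restrict it to $L_p(Q)$. Now the weak limit of $A_ky$ along $\mathcal U$ equals $\widetilde S\widetilde By$ for every $y\in X$, so there is no need to pass to the cyclic subspace $\overline{Mx}$.

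\textbf{Step 3 then runs as in the paper.}
\begin{enumerate}
\item A bounded left $M$-module map on $L_p(M)$ is $y\mapsto yb$ with $b\in M$, by the commutant theorem for the standard $L_p$-module. No normality hypothesis is needed.
\item If $b\ne0$, then $e=1_{[\delta,\infty)}(|b^*|)\ne0$ for some $\delta>0$, and $\|yb\|_p\ge\delta\|y\|_p$ on $L_p(M)e$.
\item Hence $\widetilde B$ is bounded below on $L_p(eMe)$, which contains $S_p$ because $eMe$ is type III and countably decomposable. This contradicts Theorem~\ref{thm:hrs}.
\end{enumerate}
So $\widetilde A=0$, and weak lower semicontinuity of the norm gives $\|x\|_p\le\theta$.
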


\begin{proof}
Choose a factorization $A=SB$ through $L_p(G,\tau)$.
By hyperfiniteness and separability, there is an increasing
sequence $(M_n)$ of finite-dimensional unital $*$-subalgebras
whose union is strongly dense in $M$.
Write $K_n=\mathcal U(M_n)$ and let $m_n$ be normalized Haar
measure on $K_n$. Define
\begin{equation}\label{eq:haar-average}
 C_nx=\int_{K_n}u^*A(ux)\,dm_n(u),\qquad x\in X.
\end{equation}
The integrand is norm-continuous as an $X$-valued function, so
this is a Bochner integral. We have
$\|C_n\|\le\|S\|\|B\|$.

Each $C_n$ has a factorization through the finite tracial algebra
\[
 G_n=L_\infty(K_n,m_n)\overline\otimes G,\qquad
 \tau_n=m_n\otimes\tau.
\]
Under the standard identification of $L_p(G_n)$ with
$L_p(K_n;L_p(G))$, set
\begin{align}
 B_nx(u)&=B(ux),\label{eq:averaged-first-map}\\
 S_nf&=\int_{K_n}u^*S(f(u))\,dm_n(u).
 \label{eq:averaged-second-map}
\end{align}
Then $C_n=S_nB_n$. Since $m_n$ is a probability measure,
\[
 \|B_nx\|_p^p
 =\int_{K_n}\|B(ux)\|_p^p\,dm_n(u)
 \le\|B\|^p\|x\|_p^p
\]
and
\[
 \|S_nf\|_p
 \le\|S\|\int_{K_n}\|f(u)\|_p\,dm_n(u)
 \le\|S\|\|f\|_p.
\]
Thus the factorization bounds are independent of $n$.

Fix a free ultrafilter $\mathcal U$, and put
$Q=\prod_{\mathcal U}(G_n,\tau_n)$.
For each $x\in X$, Lemmas~\ref{lem:intrinsic-ui}
and~\ref{lem:unitary-orbits-ui} show that
\[
 \{B(ux):u\in\mathcal U(M)\}
\]
is $p$-equiintegrable in $L_p(G)$.
Applying~\eqref{eq:finite-ui-tails} gives
\[
 \lim_{R\to\infty}
 \sup_{u\in\mathcal U(M)}
 \|B(ux)-B(ux)1_{[0,R]}(|B(ux)|)\|_p=0.
\]
By functional calculus in $G_n$, set
\[
 z_{n,R}=(B_nx)1_{[0,R]}(|B_nx|)\in G_n.
\]
The truncation acts pointwise in the Bochner representation and gives
\[
 \sup_n\|z_{n,R}\|_\infty\le R,\qquad
 \lim_{R\to\infty}\sup_n\|B_nx-z_{n,R}\|_p=0.
\]
By \eqref{eq:tracial-ultra-inclusion}--\eqref{eq:ultra-bounded-approx},
the class $[B_nx]_{\mathcal U}$ belongs to $L_p(Q)$. Hence
\[
 B_\infty:X\longrightarrow L_p(Q),\qquad
 B_\infty x=[B_nx]_{\mathcal U},
\]
is a bounded linear map with $\|B_\infty\|\le\|B\|$.

Reflexivity of $X$ permits us to define a bounded linear map on
the Banach-space ultraproduct by
\[
 \widetilde S_\infty([y_n]_{\mathcal U})
   =\mathop{\mathrm{weak}\!-\!\lim}_{n\to\mathcal U}S_ny_n.
\]
This is well defined: if
$\lim_{n\to\mathcal U}\|y_n-y_n'\|_p=0$, the corresponding image
difference converges to zero in norm along $\mathcal U$.
Moreover, $\|\widetilde S_\infty\|\le\|S\|$.
Let $S_\infty$ be the restriction of $\widetilde S_\infty$ to the canonical copy of
$L_p(Q)$, and put
\[
 C=S_\infty B_\infty.
\]
Thus
\begin{equation}\label{eq:average-limit}
 Cx=\mathop{\mathrm{weak}\!-\!\lim}_{n\to\mathcal U}C_nx
 \qquad(x\in X),
\end{equation}
and $C$ factors through the finite tracial space $L_p(Q)$.

We next identify $C$. If $v\in\mathcal U(M_k)$ and $n\ge k$,
right invariance of Haar measure gives
\[
 C_n(vx)
 =\int_{K_n}u^*A(uvx)\,dm_n(u)
 =v\int_{K_n}w^*A(wx)\,dm_n(w)
 =vC_nx.
\]
Passing to \eqref{eq:average-limit}, we obtain $C(vx)=vC(x)$.
It follows by linearity that $C$ commutes with left multiplication
by every element of $\bigcup_kM_k$. Strong density, bounded
approximation, and continuity of multiplication on $L_p(M)$
extend this identity to every element of $M$
\cite[Lemma~2.3]{JS}.
The commutant theorem for the standard $L_p$-module
\cite[Corollary~2.6]{JS} therefore gives an element $b\in M$
such that
\begin{equation}\label{eq:average-right-multiplier}
 Cx=xb,\qquad x\in X.
\end{equation}

We claim that $b=0$. Otherwise, write $b=|b^*|v$ and choose
$\delta>0$ such that
\[
 e=1_{[\delta,\infty)}(|b^*|)\ne0.
\]
The element $d=v^*|b^*|^{-1}e$ belongs to $M$,
$\|d\|\le\delta^{-1}$, and $bd=e$.
Consequently, for $x\in L_p(M)e$,
\[
 x=(xb)d,\qquad \|xb\|_p\ge\delta\|x\|_p.
\]
Since $R_b=C=S_\infty B_\infty$, it follows that $B_\infty$
is bounded below on $L_p(M)e$, and hence on
$eL_p(M)e=L_p(eMe)$. The algebra $eMe$ is type III.
Its $L_p$-space contains $S_p$ isometrically, whereas
$S_p$ does not embed into a finite tracial $L_p$-space for
$1\le p<2$, by \cite[Corollary~1.2]{HRS}.
The finite tracial factorization therefore forces $b=0$, and hence $C=0$.

Finally, for a fixed $x\in X$, put
\[
 a_x=\sup_{u\in\mathcal U(M)}\|(A-I)(ux)\|_p.
\]
Equation~\eqref{eq:haar-average} gives
\[
 \|C_nx-x\|_p
 \le\int_{K_n}\|u^*(A-I)(ux)\|_p\,dm_n(u)
 \le a_x.
\]
Taking the weak ultralimit and using $C=0$, together with
weak lower semicontinuity of the norm, yields
$\|x\|_p\le a_x$. 
\end{proof}

\subsection{The hyperfinite case and duality}
\label{sec:interior}

The complemented finite-head construction and
Proposition~\ref{thm:average} now give the desired Banach-space
obstruction.

\begin{theorem}\label{thm:interior}
Let $M$ be a nonzero hyperfinite type III von Neumann algebra
with separable predual, and let $F$ be a finite von Neumann
algebra admitting a faithful normal tracial state. Put
\[
 N=F\overline\otimes B(\ell_2).
\]
For $1<p<\infty$, $p\ne2$, the space $L_p(M)$ is not isomorphic
to a complemented subspace of $L_p(N)$. In particular,
$L_p(M)$ and $L_p(N)$ are not isomorphic as Banach spaces.
\end{theorem}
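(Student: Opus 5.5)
We first treat $1<p<2$ and then obtain $2<p<\infty$ by duality. Suppose $V:L_p(M)\to L_p(N)$ is bounded below with complemented range, and let $W:L_p(N)\to L_p(M)$ be a bounded left inverse, so $WV=I_X$ with $X=L_p(M)$. With $f_n=1\otimes P_n$ and $q_n=1-f_n$ as in Proposition~\ref{prop:head-complement}, write $I=D_n+Q_n$ on $L_p(N)$, where $Q_ny=q_nyq_n$. Then
\[
 I_X=WD_nV+WQ_nV .
\]
Set $A_n=WD_nV$. By Proposition~\ref{prop:head-complement}, $D_n$ factors through a finite tracial $L_p$-space with uniformly bounded constants. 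Hence each $A_n:X\to X$ factors through a finite tracial $L_p$-space. Proposition~\ref{thm:average} then gives, for every $x\in X$,
\[
 \sup_{u\in\mathcal U(M)}\|WQ_nV(ux)\|_p=\sup_{u}\|(I-A_n)(ux)\|_p\ge\|x\|_p .
\]

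We now show the left side tends to $0$ as $n\to\infty$ for a fixed $x\neq0$, which is a contradiction. By Lemma~\ref{lem:unitary-orbits-ui}, the orbit $K=\{ux:u\in\mathcal U(M)\}$ is $p$-equiintegrable. By Lemma~\ref{lem:intrinsic-ui}, $V(K)\subset L_p(N)$ is $p$-equiintegrable. Since $q_n\downarrow0$ strongly, hence ultraweakly, the definition of equiintegrability yields
\[
 \sup_{y\in V(K)}\|q_nyq_n\|_p\to0 .
\]
Therefore $\sup_u\|WQ_nV(ux)\|_p\le\|W\|\sup_{y\in V(K)}\|q_nyq_n\|_p\to0$. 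This contradicts the previous inequality, so no complemented embedding exists for $1<p<2$.

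For $2<p<\infty$, let $q=p'\in(1,2)$. A complemented embedding $V$ of $L_p(M)$ into $L_p(N)$ with left inverse $W$ dualizes to maps $W^*:L_q(M)\to L_q(N)$ and $V^*:L_q(N)\to L_q(M)$ with $V^*W^*=I$. Here we use reflexivity and the dualities $L_p(M)^*=L_q(M)$, $L_p(N)^*=L_q(N)$. Thus $L_q(M)$ is complemented in $L_q(N)$, which the first case excludes. Finally, if $L_p(M)\cong L_p(N)$, then $L_p(M)$ would be isomorphic to a complemented subspace (all of $L_p(N)$), which is impossible.

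The step that needs the most care is the tail estimate. Equiintegrability is defined using nets of projections tending to zero ultraweakly in $N$, and $q_n\downarrow0$ qualifies. The subtle point is that we need uniformity over the whole unitary orbit, not just over a fixed $x$. This uniformity is exactly what Lemma~\ref{lem:intrinsic-ui} supplies, since it transports equiintegrability through the arbitrary bounded map $V$. The hyperfiniteness hypothesis enters only through Proposition~\ref{thm:average}.
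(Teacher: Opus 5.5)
Your proposal is correct and follows the paper's proof essentially step for step. You write $I_X=WD_nV+WQ_nV$, apply Proposition~\ref{thm:average} to $A_n=WD_nV$ (which factors through a finite tracial $L_p$-space by Proposition~\ref{prop:head-complement}), make the tail term $WQ_nV$ uniformly small on the unitary orbit via Lemmas~\ref{lem:unitary-orbits-ui} and~\ref{lem:intrinsic-ui}, and dualize to handle $2<p<\infty$, exactly as the paper does.
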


\begin{proof}
First let $1<p<2$, and suppose that bounded linear maps
\[
 T:L_p(M)\longrightarrow L_p(N),\qquad
 S:L_p(N)\longrightarrow L_p(M)
\]
satisfy $ST=I$.
Let $P_m$ be the projection onto the first $m$ coordinates
of $\ell_2$, and put
\[
 f_m=1\otimes P_m,\qquad q_m=1-f_m,\qquad
 D_m(y)=y-q_myq_m.
\]
By the complemented finite-head construction
(Proposition~\ref{prop:head-complement}), $D_m$ factors through
a finite tracial $L_p$-space. Hence the operator
\[
 A_m=SD_mT:L_p(M)\longrightarrow L_p(M)
\]
has the same property.

Choose $x\in L_p(M)$ with $\|x\|_p=1$.
Lemmas~\ref{lem:unitary-orbits-ui} and~\ref{lem:intrinsic-ui}
imply that
\[
 K=\{T(ux):u\in\mathcal U(M)\}
\]
is $p$-equiintegrable in $L_p(N)$.
Since $q_m$ decreases strongly to zero, the definition gives
\[
 \sup_{u\in\mathcal U(M)}
 \|q_mT(ux)q_m\|_p\longrightarrow0.
\]
Using $ST=I$, we conclude that
\[
 \sup_{u\in\mathcal U(M)}
 \|(I-A_m)(ux)\|_p
 \le\|S\|
 \sup_{u\in\mathcal U(M)}
 \|q_mT(ux)q_m\|_p
 \longrightarrow0.
\]
This contradicts Proposition~\ref{thm:average}.

Now let $2<p<\infty$, and let $q=p/(p-1)\in(1,2)$.
A complemented embedding as above would, by duality, give
\[
 S^*:L_q(M)\longrightarrow L_q(N),\qquad
 T^*:L_q(N)\longrightarrow L_q(M),\qquad
 T^*S^*=I.
\]
The case already proved excludes these maps. Finally, a Banach
isomorphism is in particular a complemented embedding, which
proves the last assertion.
\end{proof}

\begin{remark}\label{rem:interior-scope}
Hyperfiniteness of the source is used only to construct the
averages in~\eqref{eq:haar-average}. The finite algebra $F$
in Theorem~\ref{thm:interior} need not be hyperfinite.
\end{remark}

\section{Reduction to general algebras}\label{sec:reduction}
\subsection{Expected hyperfinite subalgebras}
\label{sec:expected-transfer}

An inclusion $P\subset M$ is said to be \emph{with expectation}
if there is a faithful normal conditional expectation $E:M\to P$.
The following standard consequence of the Haagerup construction
will allow us to use a hyperfinite subalgebra in place of the
ambient algebra.

For an inclusion $P\subset M$ with expectation and with $P$
$\sigma$-finite, the standard extension theorem gives maps
\begin{equation}\label{eq:expected-lp}
 j_p:L_p(P)\longrightarrow L_p(M),\qquad
 E_p:L_p(M)\longrightarrow L_p(P),\qquad E_pj_p=I,
\end{equation}
where $j_p$ is isometric and $E_p$ is contractive,
$1\le p<\infty$; see~\cite[Remark~1.1 and Example~5.8]{HJX}.
The theorem applies after choosing a faithful normal state $\psi$
on $P$ and the faithful normal state $\psi\circ E$ on $M$.

In particular, complemented embeddings transfer through such
an inclusion. Indeed, if $T:L_p(M)\to Y$ and $S:Y\to L_p(M)$
are bounded and satisfy $ST=I$, then
\[
 (E_pS)(Tj_p)=I_{L_p(P)}.
\]
Thus $L_p(P)$ is isomorphic to a complemented subspace of $Y$.

\begin{proposition}[Expected hyperfinite subfactors]
\label{prop:expected-typeiii-factor}
Every type III factor $M$ with separable predual contains a
hyperfinite type III subfactor $P$ with expectation such
that $P'\cap M=\mathbb C1$.
\end{proposition}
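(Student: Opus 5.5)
The plan is to split according to the type of $M$ and, in each case, quote the appropriate expected-subfactor theorem named in the introduction. Since $M$ is a factor with separable predual, it has a type $\mathrm{III}_\lambda$ classification for some $\lambda\in[0,1]$. We may also assume $M$ has a faithful normal state, because its predual is separable.

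\emph{Case $0<\lambda\le 1$.} Here I would invoke Popa's theorem~\cite{PopaExpected}. It states that every factor of type $\mathrm{III}_\lambda$, $0<\lambda\le1$, with separable predual contains an irreducible hyperfinite subfactor $P$ of the same type with a faithful normal conditional expectation $E:M\to P$. Concretely, Popa works in the continuous or discrete core. He finds an expected irreducible hyperfinite $\mathrm{II}_1$ subfactor $R$ of the centralizer of a suitable faithful normal state $\varphi$, chosen with large centralizer ($\varphi$ a $\lambda$-state or an almost periodic state). He then adjoins a unitary or partial isometry implementing the modular data, which produces $P$ of type $\mathrm{III}_\lambda$ with $P'\cap M\subset R'\cap M^\varphi=\mathbb C$. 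The expectation exists because $P$ is globally invariant under $\sigma^\varphi$, by Takesaki's criterion. Thus $P$ is hyperfinite, of type III, expected, and satisfies $P'\cap M=\mathbb C1$.

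\emph{Case $\lambda=0$.} Popa's construction needs states with large centralizers. For $\mathrm{III}_0$ factors the centralizer of a faithful normal state may fail to be a factor, so this is where I would cite the more recent results. Ando--Haagerup--Houdayer--Marrakchi~\cite{AHHM} and Marrakchi~\cite{Marrakchi20} show that every type III factor with separable predual, including type $\mathrm{III}_0$, admits an irreducible hyperfinite subfactor with expectation. They obtain it through a faithful normal state whose centralizer is irreducible, that is, $(M^\varphi)'\cap M=\mathbb C$, which follows from Marrakchi's result on the existence of such states in full generality. Given $(M^\varphi)'\cap M=\mathbb C$, Popa's local argument yields an expected irreducible hyperfinite subfactor $R\subset M^\varphi$. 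I would then enlarge $R$ to a type III subfactor by adjoining elements of $M$ that implement the flow of weights, keeping the result $\sigma^\varphi$-invariant, hence expected, and still hyperfinite and irreducible.

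The main obstacle is to make sure $P$ is genuinely of type III rather than type $\mathrm{II}_1$. An irreducible expected hyperfinite subfactor contained in $M^\varphi$ is automatically tracial, so the construction must go beyond the centralizer. I would try to settle this by a type-counting argument: if $P$ were semifinite with $P'\cap M=\mathbb C$ and with expectation, then $M$ would carry a normal semifinite weight $\tau\circ E$. The modular group of $\tau\circ E$ would then be implemented by a group of unitaries in $P'\cap M=\mathbb C$, so the modular group would be trivial and $M$ semifinite, a contradiction. Hence any irreducible expected subfactor of a type III factor is itself type III. This removes the obstacle, and only the existence of an irreducible expected hyperfinite subfactor remains, which is exactly what the cited theorems provide.
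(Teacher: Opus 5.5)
The type-counting argument in your last paragraph is false, and you rely on it to guarantee that $P$ has type III. Suppose $P\subset M$ is semifinite with trace $\tau$ and faithful normal expectation $E$. The identity $\sigma^{\tau\circ E}_t|_P=\sigma^\tau_t=\mathrm{id}$ only says that the modular group of $\tau\circ E$ fixes $P$ pointwise. Nothing makes it inner, let alone implemented by unitaries of $P'\cap M$.

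A counterexample is the Powers factor $R_\lambda\cong R_{0,1}\rtimes_\theta\mathbb Z$ with $0<\lambda<1$. Here $R_{0,1}$ is the hyperfinite $\mathrm{II}_\infty$ factor and $\theta$ scales its trace by $\lambda$. For $n\ne0$, $\theta^n$ scales the trace by $\lambda^n\ne1$, so it is outer. Hence $R_{0,1}'\cap R_\lambda=\mathbb C1$, and the canonical expectation onto $R_{0,1}$ is faithful and normal. So $R_{0,1}$ is an irreducible, expected, hyperfinite, semifinite subfactor of a type III factor.

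The hyperfinite $\mathrm{II}_1$ subfactor $R\subset M^\varphi$ with $R'\cap M=\mathbb C1$ is another such example. Popa's theorem produces it from a state with $(M^\varphi)'\cap M=\mathbb C1$, and your own first case starts from it. Thus irreducibility plus expectation does not force type III. The type has to be part of the conclusion of the existence theorem you invoke, so it is not true that only the existence of an irreducible expected hyperfinite subfactor remains.

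The case analysis also fails, and at the two ends your attributions are interchanged relative to the paper's. Popa's centralizer method needs a faithful normal state with $(M^\varphi)'\cap M=\mathbb C1$. For a $\mathrm{III}_1$ factor, Haagerup's theorem says such a state exists exactly when the bicentralizer is trivial, which is Connes' bicentralizer problem. So citing Popa does not cover an arbitrary $\mathrm{III}_1$ factor. This is why the paper uses Marrakchi's Theorem~D(i), which yields a $\mathrm{III}_1$ subfactor directly.

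For $\mathrm{III}_0$, the mechanism you describe is impossible. The condition $(M^\varphi)'\cap M=\mathbb C1$ would make $M^\varphi$ a factor. Connes' theorem would then give $\mathrm{Sp}(\Delta_\varphi)=S(M)=\{0,1\}$. Since $\Delta_\varphi$ is injective, this forces $\Delta_\varphi=1$, so $\varphi$ would be a trace on a type III factor. Hence no theorem can supply such states on a $\mathrm{III}_0$ factor. Your subsequent step of adjoining elements that implement the flow of weights is not backed by any cited result.

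The paper instead takes Popa's construction for $\mathrm{III}_0$. For $0<\lambda<1$ it uses Connes' periodic state, the irreducible $\sigma^\varphi$-invariant hyperfinite $\mathrm{III}_\lambda$ subfactor of Ando--Haagerup--Houdayer--Marrakchi (Corollary~5.7), and Takesaki's theorem for the expectation.
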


\begin{proof}
This combines established existence theorems in the three
cases of Connes' type III classification.

If $M$ has type $\mathrm{III}_0$, Popa's construction
\cite[Introduction and Section~3]{PopaExpected} gives a
hyperfinite type $\mathrm{III}_0$ subfactor with normal
conditional expectation and trivial relative commutant.
Irreducibility makes the expectation faithful, as recalled in
the proof of Proposition~\ref{prop:expected-typeiii-general}.

Suppose that $M$ has type $\mathrm{III}_\lambda$, where
$0<\lambda<1$. Connes' periodic-state result gives a faithful
normal state $\varphi$ on $M$ such that
\[
 \sigma_T^\varphi=I,\qquad T=\frac{2\pi}{|\log\lambda|};
\]
see \cite[Remark~1.3.3 and the proof of Theorem~4.3.2(a)]{Connes73}.
By \cite[Corollary~5.7]{AHHM}, there is an irreducible hyperfinite
type $\mathrm{III}_\lambda$ subfactor $P\subset M$ that is
globally invariant under $\sigma^\varphi$.
The modular conditional-expectation theorem therefore provides
a faithful normal $\varphi$-preserving conditional expectation
$M\to P$; see \cite[Section~6.1]{HJX}.

If $M$ has type $\mathrm{III}_1$, the assertion follows from
\cite[Theorem~D(i)]{Marrakchi20}; one may choose $P$ to have
type $\mathrm{III}_1$. No hyperfiniteness assumption on $M$
is present in any of these three applications.
\end{proof}

\subsection{Measurable assembly of expected subfactors}
\label{sec:expected-nonfactor}

Passing from factors to general algebras requires a measurable choice of
both the subalgebras and their expectations. We give that choice explicitly.
Throughout this subsection all von Neumann subalgebras have the same unit
as their ambient algebra.

We first recall the descriptive-set-theoretic facts that will be used.
Let $H$ be a separable infinite-dimensional Hilbert space, let $B(H)_1$
carry its weak operator topology, and let $\mathcal V(H)$ denote the
standard Borel space of von Neumann algebras on $H$. An algebra is
identified with its compact unit ball. The following are established
properties of this parametrization:
\begin{enumerate}
\item there are Borel maps $a_k:\mathcal V(H)\to B(H)_1$ such that
      $\{a_k(A):k\ge1\}$ is weak-operator dense in $A_1$ for every $A$;
\item membership, inclusion, the commutant operation, and intersection
      are Borel;
\item the classes of factors, type III algebras, and hyperfinite algebras
      are Borel.
\end{enumerate}
The selectors in (1) are supplied by \cite[Theorem~2]{Effros65}.
For (2), membership is the compact-set membership relation, inclusion
is tested on the selectors, and the remaining operations are covered by
\cite[Theorem~3 and its corollaries]{Effros65}.
The factor and type assertions in (3) are recalled in
\cite[Section~12.E]{Kechris95}; the hyperfiniteness assertion is
also recalled explicitly in the introductory paragraph of
\cite{Kechris98}.
Weak operator and ultraweak convergence agree on bounded subsets of
$B(H)$, so the density in (1) can equally be used for normal maps.

We also use the following consequence of normal Stinespring
dilation and the amplification theorem for normal representations
\cite[Theorems~IV.3.6 and IV.5.5]{Takesaki}.
Every normal unital completely positive map $F:A\to B(H)$,
where $A\subset B(H)$ and $H$ is separable, has the form
\begin{equation}\label{eq:normal-witness}
 F(a)=V^*(a\otimes1)V\qquad(a\in A)
\end{equation}
for an isometry $V:H\to H\otimes\ell_2$.
Separability permits a separable minimal dilation and a countable
amplification of the given faithful normal representation.

\begin{proposition}[An expected hyperfinite algebra in a type III algebra]
\label{prop:expected-typeiii-general}
Every nonzero type III von Neumann algebra $M$ with separable predual
contains a hyperfinite type III von Neumann subalgebra $P$ and admits
a faithful normal conditional expectation $E:M\to P$.
The subalgebra may be chosen so that
\[
 Z(M)\subset P\quad\hbox{and}\quad P'\cap M=Z(M).
\]
\end{proposition}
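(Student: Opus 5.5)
We plan to disintegrate $M$ over its center and assemble fiberwise the subfactors from Proposition~\ref{prop:expected-typeiii-factor}. Represent $M$ on a separable Hilbert space and write
\[
 M=\int_X^\oplus M_x\,d\nu(x),\qquad H=\int_X^\oplus H_x\,d\nu(x),
\]
with $Z(M)=L_\infty(X,\nu)$. Since $M$ is type III with separable predual, almost every fiber $M_x$ is a type III factor. Discarding a null set and splitting $X$ into countably many Borel pieces on which $\dim H_x$ is constant, we may take $H_x=H$ for a fixed separable space $H$. The map $x\mapsto M_x\in\mathcal V(H)$ is then Borel, with values in the Borel class of type III factors by property~(3).

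The next step is a measurable selection. Consider the set $\mathcal S$ of triples $(A,P,V)$ with $A,P\in\mathcal V(H)$ and $V$ an isometry $H\to H\otimes\ell_2$ satisfying the following conditions: $P\subset A$; $P$ is a hyperfinite type III factor; $P'\cap A=\mathbb C1$; the compression $F_V(a)=V^*(a\otimes1)V$ takes values in $P$ on $A$, restricts to the identity on $P$, and is faithful on $A$. Using \eqref{eq:normal-witness}, every faithful normal conditional expectation $A\to P$ is of the form $F_V$ for such a $V$. All conditions except faithfulness are Borel: they are tested on the countable selectors $a_k(A),a_k(P)$ from~(1), together with properties~(2) and~(3) and WOT continuity of $a\mapsto V^*(a\otimes1)V$. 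Faithfulness is the delicate point. We avoid it by using that $P'\cap A=\mathbb C1$ forces $F_V$ to be faithful. Indeed, the support of $F_V$ is a projection $z\in A$ with $F_V(a)=F_V(za z)$, and the bimodule property puts $z$ in $P'\cap A=\mathbb C1$; since $F_V(1)=1$, we get $z=1$. Thus $\mathcal S$ is Borel (hence analytic). By Proposition~\ref{prop:expected-typeiii-factor}, its projection onto the first coordinate contains every type III factor. The Jankov--von Neumann uniformization theorem then gives a $\sigma(\Sigma^1_1)$-measurable, hence $\nu$-measurable, selector $A\mapsto(P_A,V_A)$. Composing with $x\mapsto M_x$ and modifying on a null set yields Borel fields $x\mapsto P_x$ and $x\mapsto V_x$.

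Finally, set
\[
 P=\int^\oplus P_x\,d\nu\qquad\text{and}\qquad E=\int^\oplus F_{V_x}\,d\nu .
\]
The following properties must then be checked. $P$ is a von Neumann subalgebra of $M$ containing the diagonal algebra $Z(M)$. $E$ is a normal conditional expectation onto $P$: it is given by $E(a)=V^*(a\otimes1)V$ with $V=\int^\oplus V_x$. $E$ is faithful, because $E(a^*a)=0$ gives $F_{V_x}(a_x^*a_x)=0$ almost everywhere. $P$ is type III because its fibers are type III factors. $P$ is hyperfinite because a direct integral of hyperfinite (equivalently injective) algebras is injective and, with separable predual, hyperfinite. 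For relative commutants, we use that commutants and intersections commute with direct integrals over the common diagonal:
\[
 P'\cap M=\int^\oplus (P_x'\cap M_x)\,d\nu=Z(M).
\]
I expect the main obstacle to be the measurable-selection step: the Borel coding of $\mathcal S$, especially expressing the conditions ``hyperfinite type III factor'' and ``expectation onto $P$'' via countable selectors. Faithfulness has already been reduced to irreducibility by the support argument above.
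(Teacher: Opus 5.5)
Your proposal is correct and follows essentially the same route as the paper: a central decomposition, a Borel set of (irreducible hyperfinite type III subfactor, Stinespring isometry) data coded by countable selectors, Jankov--von Neumann uniformization, and assembly of $P$ and $E=V^*(\cdot\otimes1)V$ as direct integrals, with faithfulness of the fibre expectations deduced from irreducibility by the same support-projection argument. The differences are only cosmetic. You uniformize over $\mathcal V(H)$ and then compose with the Borel field $x\mapsto M_x$, whereas the paper uniformizes directly over $X$. You also leave implicit the appeal to Tomiyama's theorem, which makes the normal projection $F_V$ (and hence $E$) a conditional expectation; this is needed before you invoke the bimodule property.
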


\begin{proof}
Choose a faithful normal representation of $M$ on a separable Hilbert
space and its central decomposition. After discarding a null set and
trivializing its infinite-dimensional Hilbert-space fibres, we may write
\[
 M=\int_X^\oplus M_x\,d\mu(x),\qquad Z(M)=L_\infty(X,\mu),
\]
where $(X,\mu)$ is a standard probability space, $x\mapsto M_x$ is a
Borel map into $\mathcal V(H)$, and every $M_x$ is a type III factor.
These are the standard central-decomposition results; in particular
the type III part respects central decomposition by
\cite[Section~III.1.6, especially III.1.6.4]{Blackadar}.

Let $\mathcal I$ be the space of isometries from $H$ to
$H\otimes\ell_2$, endowed with the strong operator topology.
This is a Polish space. For $V\in\mathcal I$ put
\[
 \Phi_V(b)=V^*(b\otimes1)V\qquad(b\in B(H)).
\]
Each $\Phi_V$ is normal and unital completely positive. The map
$(V,b)\mapsto\Phi_V(b)$, from $\mathcal I\times B(H)_1$
to $B(H)_1$, is Borel. In fact it is continuous when the two operator
balls carry the weak operator topology: for fixed $\xi,\eta\in H$,
\[
 \langle\Phi_V(b)\xi,\eta\rangle
   =\langle(b\otimes1)V\xi,V\eta\rangle,
\]
and the right-hand side is jointly continuous for strong convergence
of $V$ and weak operator convergence of bounded $b$.

Consider the set $\mathcal R\subset X\times\mathcal V(H)\times\mathcal I$
of triples $(x,Q,V)$ satisfying
\begin{align*}
 &Q\text{ is a hyperfinite type III factor},\qquad Q\subset M_x,
       \qquad Q'\cap M_x=\mathbb C1,\\
 &\Phi_V(a_k(M_x))\in Q\quad(k\ge1),\\
 &\Phi_V(a_k(Q))=a_k(Q)\quad(k\ge1).
\end{align*}
All these conditions are Borel by the facts recalled above, so
$\mathcal R$ is Borel. If $(x,Q,V)\in\mathcal R$, normality and the
density of the selectors imply that
$\Phi_V(M_x)\subset Q$ and $\Phi_V|_Q=I_Q$.
Consequently $\Phi_V|_{M_x}$ is a normal contractive projection onto
$Q$, hence a conditional expectation by Tomiyama's theorem
\cite{Tomiyama57}.

Every section $\mathcal R_x$ is nonempty. Indeed,
Proposition~\ref{prop:expected-typeiii-factor} provides an irreducible
hyperfinite type III subfactor $Q\subset M_x$ with a faithful normal
conditional expectation, and
\eqref{eq:normal-witness} supplies an isometry representing
that expectation.

The Jankov--von Neumann theorem \cite[Theorem~18.1]{Kechris95}
gives a universally measurable
selection $x\mapsto(P_x,V_x)$ with $(x,P_x,V_x)\in\mathcal R$.
For the fixed probability measure $\mu$, this is measurable for the
completion of the Borel sigma-algebra. Since $\mathcal V(H)\times\mathcal I$ is a standard
Borel space, the selection agrees almost everywhere with a Borel map. Restricting
to a conull Borel subset of $X$, we therefore obtain a Borel selection
whose values still lie in $\mathcal R$. An almost-everywhere selection suffices for the direct integral.

Set $P=\int_X^\oplus P_x\,d\mu(x)$, viewed as a von Neumann
subalgebra of $M$. On $\mathcal H=L_2(X,\mu;H)$, form the
decomposable isometry
\[
 V=\int_X^\oplus V_x\,d\mu(x):
 \mathcal H\longrightarrow\mathcal H\otimes\ell_2.
\]
Define
\[
 E(a)=V^*(a\otimes1)V\qquad(a\in M).
\]
The map $E$ is normal and unital completely positive, being a
compression of the normal amplified representation of $M$.
Its fibre at $x$ is $E_x=\Phi_{V_x}|_{M_x}$. Thus $E(M)\subset P$
and $E|_P=I_P$, and Tomiyama's theorem again makes $E$ a conditional
expectation.

For completeness, irreducibility ensures that the selected fibre
expectations are faithful, even though faithfulness was not imposed
as a separate Borel condition. Fix $x$, choose a faithful normal state
$\psi$ on $P_x$, and let $s$ be the support of $\psi\circ E_x$.
The projection $1-s$ is the largest projection $q\in M_x$ with
$E_x(q)=0$. Bimodularity implies that this collection of projections
is invariant under conjugation by the unitaries of $P_x$; hence
$s\in P_x'\cap M_x=\mathbb C1$. Since $E_x(1)=1$, we have $s\ne0$,
so $s=1$ and $E_x$ is faithful. If $a\in M_+$ and $E(a)=0$, then
$E_x(a_x)=0$ almost everywhere, whence $a_x=0$ almost everywhere.
This proves the faithfulness of $E$.

Finally, $P$ contains $L_\infty(X,\mu)$, and the usual commutant
formula for direct integrals gives
\[
 P'\cap M=\int_X^\oplus(P_x'\cap M_x)\,d\mu(x)
          =L_\infty(X,\mu)=Z(M).
\]
In particular $Z(P)=L_\infty(X,\mu)$, so the displayed decomposition
is the central decomposition of $P$. Since the factors $P_x$ have type III,
$P$ has type III by \cite[III.1.6.4]{Blackadar}.
The fibres are hyperfinite and therefore injective. The direct-integral
criterion \cite[Proposition~IV.2.2.12]{Blackadar} makes $P$ injective,
and the equivalence of injectivity and hyperfiniteness for algebras
with separable predual \cite[Corollary~IV.2.6.2]{Blackadar} makes $P$
hyperfinite. The predual of $P$ is separable because $P$ acts on the
separable space $\mathcal H$. 
\end{proof}

\subsection{Semifinite targets and completion of the proof}
\label{sec:nonfactor}

We now remove the amplified-finite presentation of the target.
The following reduction uses a single corner of the target
algebra and requires no compatibility between the Banach
embedding and the central decomposition.

\begin{lemma}\label{lem:separable-support}
Let $1\le p<\infty$, let $N$ be a von Neumann algebra, and let
$E$ be a nonzero separable subspace of $L_p(N)$. There is a
projection $e\in N$ such that $eNe$ is $\sigma$-finite and
\[
                     E\subset eL_p(N)e=L_p(eNe).
\]
The corner space $L_p(eNe)$ is the range of the contractive projection
$y\mapsto eye$ on $L_p(N)$.
\end{lemma}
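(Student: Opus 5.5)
The plan is to take a countable dense subset $\{y_k\}$ of $E$ and, for each $k$, produce a $\sigma$-finite projection $e_k\in N$ with $e_ky_ke_k=y_k$. The projection $e=\bigvee_k e_k$ is then the supremum of countably many $\sigma$-finite projections, hence $\sigma$-finite, so $eNe$ is $\sigma$-finite. The remaining task is to show that $e$ captures all of $E$.

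\emph{Support projections.} For $y\in L_p(N)$ (Haagerup space), write $y=v|y|$ and let $\ell(y)=vv^*$ and $r(y)=v^*v$ be its left and right supports. These lie in $N$, and $y=\ell(y)\,y\,r(y)$. The element $|y|^p\in L_1(N)$ corresponds to a positive normal functional $\varphi$ on $N$ whose support is $r(y)$. A projection supporting a faithful normal positive functional is $\sigma$-finite, so $r(y)$ is $\sigma$-finite. Applying the same reasoning to $y^*$ shows that $\ell(y)$ is $\sigma$-finite. Put
\[
 e_k=\ell(y_k)\vee r(y_k).
\]
The supremum of two $\sigma$-finite projections is $\sigma$-finite: one may add the two functionals, and the support of the sum is the join of the supports. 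Moreover $e_ky_ke_k=y_k$.

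\emph{Assembling the projection.} Let $e=\bigvee_k e_k$. Choose faithful normal states $\varphi_k$ on $e_kNe_k$ and set $\varphi=\sum_k2^{-k}\varphi_k(e_k\cdot e_k)$. Its support is $e$, so $eNe$ is $\sigma$-finite. Since $e\ge e_k$, we get $ey_ke=y_k$ for every $k$. The map $y\mapsto eye$ is contractive, being two-sided multiplication by contractions, and idempotent. Its range is therefore closed and contains the dense set $\{y_k\}$, so it contains $E$. The identification $eL_p(N)e=L_p(eNe)$ is the standard one recalled in Section~\ref{sec:prelim}.

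\emph{Main obstacle.} The only delicate step is the assertion that the supports of elements of the Haagerup $L_p(N)$ lie in $N$ and are $\sigma$-finite, in particular that $r(y)$ is the support of the normal functional corresponding to $|y|^p$. I would cite the standard Haagerup theory in \cite[Section~3]{PX}: elements of $L_p(N)$ are $\tau$-measurable operators affiliated with the crossed product, their polar parts lie in $N$, and the correspondence $L_1(N)\cong N_*$ preserves supports. Everything else is routine.
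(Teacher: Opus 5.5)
Your proof is correct and is essentially the paper's argument. Both take a dense sequence $(y_k)$, form a countable weighted sum of normal positive functionals whose supports dominate the left and right supports of the $y_k$, and let $e$ be the support of that sum, so the functional is faithful on $eNe$. The paper does this in one step with $h=\sum_k 2^{-k}(|y_k|^p+|y_k^*|^p)$ for $(y_k)$ dense in the unit ball, whereas your detour through the projections $e_k$ and fresh faithful states on $e_kNe_k$ is slightly longer but equivalent.
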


\begin{proof}
Choose a sequence $(y_k)$ dense in the unit ball of $E$, and set
\[
 h=\sum_{k=1}^{\infty}2^{-k}
       \bigl(|y_k|^p+|y_k^*|^p\bigr)\in L_1(N)_+.
\]
The series converges in norm, since each summand before its weight
has $L_1$-norm at most two. Let $e$ be the support of the normal
positive functional represented by $h$. Positivity shows that
$e$ dominates the supports of both $|y_k|$ and $|y_k^*|$.
Thus $y_k=ey_ke$ for every $k$, and density gives the stated
inclusion. The restriction of that functional to $eNe$ is
faithful, so $eNe$ is $\sigma$-finite. The support, corner and
module properties used here are the standard properties of
Haagerup $L_p$-spaces \cite[Section 3]{PX}.
\end{proof}

We recall the following consequence of the structure theory of
semifinite von Neumann algebras. If $B$ is $\sigma$-finite and
semifinite, there is a finite von Neumann algebra $F$, admitting
a faithful normal tracial state, such that
\begin{equation}\label{eq:semifinite-stabilization}
 B\overline\otimes B(\ell_2)
       \cong F\overline\otimes B(\ell_2)
\end{equation}
by a normal $*$-isomorphism.
This follows from the decomposition into amplifications of
finite algebras and the existence of faithful normal tracial
states on countably decomposable finite algebras
\cite[III.1.5.6(iii) and III.2.5.8]{Blackadar};
see also \cite[Chapter V, Proposition 1.40]{Takesaki}.

The $L_p$ consequence of~\eqref{eq:semifinite-stabilization} follows
from the standard corner maps. The projection $1_B\otimes e_{11}$ in
the left side of \eqref{eq:semifinite-stabilization} has corner
isomorphic to $B$. Transporting the inclusion and compression
across that isomorphism gives contractions
\begin{equation}\label{eq:semifinite-corner-maps}
 J:L_p(B)\longrightarrow L_p(F\overline\otimes B(\ell_2)),
 \qquad
 K:L_p(F\overline\otimes B(\ell_2))\longrightarrow L_p(B)
\end{equation}
such that $KJ=I$. In particular, $J$ is isometric.

\begin{proof}[Proof of Theorem~\ref{thm:unified}]
The space $L_p(M)$ is separable for every finite $p$ under the
hypothesis on $M$ \cite{PX}. Suppose first that
$T:L_1(M)\to L_1(N)$ is an embedding.
Apply Lemma~\ref{lem:separable-support} to $T(L_1(M))$.
The algebra $B=eNe$ is both $\sigma$-finite and semifinite.
Composing $T$, regarded as taking values in $L_1(B)$, with
the map $J$ in \eqref{eq:semifinite-corner-maps} gives an
embedding into $L_1(F\overline\otimes B(\ell_2))$.
This contradicts Theorem~\ref{thm:main}.

For the second assertion, suppose that bounded maps
\[
 T:L_p(M)\to L_p(N),\qquad S:L_p(N)\to L_p(M)
\]
satisfy $ST=I$. Proposition~\ref{prop:expected-typeiii-general}
provides a nonzero hyperfinite type III algebra $P\subset M$
with faithful normal conditional expectation.
By~\eqref{eq:expected-lp}, the maps
\[
 T_0=Tj_p:L_p(P)\to L_p(N),\qquad
 S_0=E_pS:L_p(N)\to L_p(P)
\]
satisfy $S_0T_0=I$.
The same support reduction places the range of $T_0$ in
$L_p(B)$ for a $\sigma$-finite semifinite corner $B=eNe$.
Let $S_B$ be the restriction of $S_0$ to that corner.
Then $JT_0$ has left inverse $S_BK$, since
\[
                    (S_BK)(JT_0)=S_BT_0=I.
\]
This is a complemented embedding of $L_p(P)$ into
$L_p(F\overline\otimes B(\ell_2))$, contrary to
Theorem~\ref{thm:interior}.
\end{proof}

\begin{corollary}\label{cor:mixed-types}
Let $A$ have separable predual and let $N$ be semifinite.
If $A_*$ is isomorphic to a subspace of $N_*$, then $A$ is
semifinite. More generally, if $1<p<\infty$, $p\ne2$,
and $L_p(A)$ is isomorphic to a
complemented subspace of $L_p(N)$, then $A$ is semifinite.
\end{corollary}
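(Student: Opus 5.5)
We argue by contraposition: assume $A$ is not semifinite and show that neither embedding can exist. The type decomposition gives a central projection $z\in A$ such that $zA$ is semifinite and $(1-z)A$ is type III. Since $A$ is not semifinite, $z\ne1$, so $M:=(1-z)A$ is a nonzero type III von Neumann algebra. Its predual is separable, because it is a quotient of (equivalently, complemented in) $A_*$.

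The first step is to see that $L_p(M)$ sits complementably inside $L_p(A)$. Because $1-z$ is central, we have
\[
 L_p(A)=L_p(zA)\oplus_p L_p((1-z)A),
\]
and the map $y\mapsto(1-z)y(1-z)=(1-z)y$ is a contractive projection of $L_p(A)$ onto $L_p(M)$. This is the corner identification recalled in Section~\ref{sec:prelim}, with $1\le p<\infty$. Equivalently, the inclusion $i_{1-z}:M_*\to A_*$ from Section~\ref{sec:prelim} is an isometry with contractive left inverse.

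Now the two cases. If $T:A_*\to N_*$ is a Banach embedding, then $T\,i_{1-z}$ is a Banach embedding of $M_*$ into $N_*$, contradicting Theorem~\ref{thm:unified}(1). If instead $T:L_p(A)\to L_p(N)$ and $S:L_p(N)\to L_p(A)$ satisfy $ST=I$ with $1<p<\infty$, $p\ne2$, let $j$ be the inclusion of $L_p(M)$ into $L_p(A)$ and $\pi$ the compression onto $L_p(M)$. Then
\[
 (\pi S)(Tj)=\pi j=I_{L_p(M)},
\]
which is a complemented embedding of $L_p(M)$ into $L_p(N)$, contradicting Theorem~\ref{thm:unified}(2).

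The only point needing care is the existence of a nonzero type III central summand when $A$ is not semifinite. This is the standard decomposition of a von Neumann algebra into a semifinite part and a purely infinite (type III) part along a central projection, as in \cite{Takesaki}. Everything else is a direct application of Theorem~\ref{thm:unified}.
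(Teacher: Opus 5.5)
Your proof is correct and follows the paper's argument. You pass to the nonzero type III central summand, use multiplication by the central projection as a contractive projection onto its $L_p$-space, and restrict the embedding to that summand. In the complemented case you compose the left inverse with this compression, which contradicts Theorem~\ref{thm:unified}.
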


\begin{proof}
If $A$ is not semifinite, the type decomposition of $A$ has a nonzero
type III central summand $zA$ \cite{Takesaki}.
The map $x\mapsto zx$ is a contractive projection of $L_p(A)$
onto $L_p(zA)$. Restricting an endpoint embedding to this
summand contradicts Theorem~\ref{thm:unified}(1).
In the second assertion, if $T$ has a bounded left inverse
$S$, the restriction of $T$ to
$L_p(zA)$ has left inverse $x\mapsto zSx$, contradicting
Theorem~\ref{thm:unified}(2).
\end{proof}

\begin{remark}
Let $M$ and $N$ be as in Theorem~\ref{thm:unified}, and let
$1<p<\infty$, $p\ne2$. After the target reduction, the proof
requires a bounded left inverse $S$ to form $SD_nT$.
For an arbitrary embedding $T$, the range of $D_nT$ need not
lie in $T(L_p(M))$, and $T^{-1}$ need not extend to the ambient
space. Uncomplemented embeddings are therefore not excluded.
\end{remark}

\end{document}